\numberwithin{equation}{section}
\newtheorem{theorem}{Theorem}[section]
\newtheorem{lemma}[theorem]{Lemma}
\newtheorem{definition}[theorem]{Definition}
\newtheorem{remark}[theorem]{Remark}
\newtheorem{corollary}[theorem]{Corollary}
\newtheorem{example}[theorem]{Example}
\newtheorem{algo}{Algorithm}
\newtheorem{assumption}{Assumption}
\newcommand\be{\begin{equation}}
\newcommand\ee{\end{equation}}
\newcommand{\dx}{\,\text{\rm{}d}x}
\newcommand{\dt}{\,\text{\rm{}d}t}
\def\R{\mathbb  R}
\def\meas{\operatorname{meas}}
\def\proj{\operatorname{proj}}
\def\gph{\operatorname{gph}}
\newcolumntype{L}{>{$}l<{$\quad}}
\newcolumntype{R}{>{$}r<{$\quad}}
\newcolumntype{C}{>{$}c<{$}}
\newcommand{\mc}[1]{\multicolumn{1}{c}{#1}}
\title{Iterative hard-thresholding applied to optimal control problems with $L^0(\Omega)$ control cost}
\author{Daniel Wachsmuth%
\footnote{Institut f\"ur Mathematik,
Universit\"at W\"urzburg,
97074 W\"urzburg, Germany, {\tt daniel.wachsmuth@mathematik.uni-wuerzburg.de}.
This research was partially supported by the German Research Foundation DFG under project grant Wa 3626/1-1.}}
\begin{document}

\maketitle

\paragraph{Abstract.}
We investigate the hard-thresholding method applied to optimal control problems with $L^0(\Omega)$ control
cost, which penalizes the measure of the support of the control. As the underlying measure space is non-atomic,
arguments of convergence proofs in $l^2$ or $\R^n$ cannot be applied.
Nevertheless, we prove the surprising property that the values of the objective functional are lower semicontinuous
along the iterates. That is, the function value in a weak limit point is less or equal than the lim-inf of the function
values along the iterates. Under a compactness assumption, we can prove that weak limit points are strong limit points,
which enables us to prove certain stationarity conditions for the limit points.
Numerical experiments are carried out, which show the performance of the method.
These indicates that the method is robust with respect to discretization.
In addition, we show that solutions obtained by the thresholding algorithm are superior to solutions of $L^1(\Omega)$-regularized problems.

\paragraph{Keywords.} Sparse optimal control, hard thresholding method, $L^0$ optimization.

\paragraph{MSC classification.}
49M20, 
49K20 
\section{Introduction}

In this article, we are interested in the development of an algorithm to solve
optimization problems of the type
\be\label{eq001}
 \min f(u) + \frac \alpha 2 \|u\|_{L^2(\Omega)}^2 + \beta \|u\|_0
\ee
subject to
\be\label{eq002}
u\in U_{ad} := \{ v\in L^2(\Omega):  |v(x)| \le b \text{ a.e.\ on } \Omega \}.
\ee
Here, $\Omega\subset \R^n$ is an open domain.
The objective functional contains the so-called $L^0$-norm (which is -- of course -- not a norm) that is defined by
\[
 \|u\|_0 := \operatorname{meas} \{ x: \ u(x)\ne 0\} \quad u:\Omega \to \R \text{ measurable.}
\]
The parameters are assumed to satisfy $\alpha\ge0$, $\beta>0$,
and $b\in [0,+\infty]$.
The function $f$ maps from $L^2(\Omega)$ to $\R$.
Here, we have in mind to choose
\[
 f(u) := j(y),
\]
where $y$ is the solution of a possibly nonlinear partial differential equation
\[
 E(y,u)=0.
\]
Then in problem \eqref{eq001}--\eqref{eq002} we are looking for a control $u$ that minimizes
a certain objective functional, where also the measure of the support of $u$ is penalized.
The parameter $\beta$ weights the measure of the support against the other ingredients of the
functional. For large values of $\beta$, one expects that the support of solutions is small,
and the solution is called sparse control.
One application of such problems are actuator location problems, where one tries to find
optimal actuator locations for the controls that are small.
We refer to the seminal paper \cite{Stadler2009}, which addresses this problem by using $\|u\|_{L^1(\Omega)}$
instead of $\|u\|_0$ in the cost functional.
Recently, optimal control problems involving $L^0$-norms were derived to enforce a particular control structure.
We refer to \cite{ClasonItoKunisch2016} for an application to switching control problems
and to \cite{ClasonKunisch2014} for control problems, where the control is allowed to take values only from a finite set.

The optimization problem \eqref{eq001}--\eqref{eq002} is very challenging to analyze due to
the presence of the term $\|u\|_0$. Since the mapping $u\mapsto \|u\|_0$ is not weakly lower
semicontinuous from $L^p(\Omega)$ to $\R$ for all $p\in[1,\infty)$, it is not possible to
prove existence of solutions.
It is possible to prove existence of solutions if one adds an additional $H^1$-control
cost to the functional. We refer to \cite{ItoKunisch2014} for a discussion of this and other
possibilities to obtain existence results.
If the structure of the function $f$ allows for the technique of needle perturbations, then
a local solution satisfies the Pontryagin maximum principle \cite{ItoKunisch2014}, see also \eqref{eqpmp}. Due to the
inherently combinatorial nature of the $L^0$-term, there might be many local solutions.

A popular approach to circumvent the difficulties associated to $\|u\|_0$
is to use $L^1$-norms instead, i.e., to solve problem of the type
\be\label{eq003}
  \min_{u\in U_{ad}} f(u) + \frac \alpha 2 \|u\|_{L^2(\Omega)}^2 + \gamma \|u\|_{L^1(\Omega)}.
\ee
Here, existence of solutions can be proven by the standard direct method of the calculus of variations
if $\alpha>0$ or $b<+\infty$.
If $f$ is convex and $\alpha>0$, then the functional in \eqref{eq003}  is strongly convex, and the resulting
optimization problem is uniquely solvable. Under mild conditions on $\nabla f$, one can prove
that minimizers of \eqref{eq003} are sparse, \cite{WachsmuthWachsmuth2011}. Moreover, there is $\gamma_0>0$ such that
$u=0$ is the unique solution of  \eqref{eq003} for $\gamma \ge\gamma_0$, see, e.g., \cite{Stadler2009}.
This can be interpreted as exact penalization of the constraint $u=0$ by using the $L^1$-norm.
In addition, a solution $u_\gamma$ of \eqref{eq003} for $\alpha=0$ is under a suitable assumption
a solution of \eqref{eq001}--\eqref{eq002}, see \cref{sec23}.
In case $\alpha>0$ such a result is not available, and as numerical results suggest such a result cannot be expected.

In this article, we propose to use a hard-thresholding algorithm to compute a sequence $(u_k)$ of
feasible points of \eqref{eq001}--\eqref{eq002} for which the sequence of objective function values is decreasing.
This algorithm can be interpreted as proximal gradient method.
The proximal gradient method is a first-order method to
solve
\[
 \min_{x\in H} f(x) + g(x).
\]
with smooth $f$ and non-smooth $g$ on the Hilbert space $H$.
Given an iterate $x_k$, the next iterate $x_{k+1}$ is computed as the global solution of
\[
 \min f(x_k ) + \nabla f(x_k)(x-x_k) + \frac L2 \|x-x_k\|_H^2 + g(x),
\]
where $L>0$ is a parameter. Introducing the proximal map
\[
 \operatorname{prox}_{L^{-1}g}(x) = \arg\min \left( \frac 12 \|x-x_k\|_H^2 + L^{-1}g(x) \right),
\]
the iteration above can be written as
\[
 x_{k+1} =  \operatorname{prox}_{L^{-1}g}\left( x_k - \frac1L \nabla f(x_k)\right).
\]
Here, we can see that $L^{-1}$ acts as step length in a gradient step.
For $g=0$ the gradient descent method is recovered. Choosing $g$ as the indicator function of a convex set,
we obtain the projected gradient method.
If $f$ and $g$ are convex, then the iterates of this method converge weakly in $H$ to a minimizer of $f+g$,
for the precise statement we refer to \cite[Corollary 27.9]{BauschkeCombettes2011}.

We will employ this method for the splitting
\[
  f(u) + \left( \frac \alpha 2 \|u\|_{L^2(\Omega)}^2 + \beta \|u\|_0  + I_{U_{ad}}\right) = : f(u) + g(u),
\]
where $I_{U_{ad}}$ is the indicator function of $U_{ad}$.
Due to the special structure of the function $g$, we can compute its proximal
map pointwise.
For $\alpha=0$ and $b=+\infty$, the proximal map $\operatorname{prox}_{L^{-1}g}$ is given by the so-called hard-thresholding
operator. In the case $\alpha>0$, the resulting proximal gradient step can be rewritten in terms of the
hard-thresholding operator, see \cref{sec32}. The convergence of this method
for sparse approximation problems with $l^2$ coefficients
with linear-quadratic $f$ was proven in \cite{BlumensathDavies2008}.
The interpretation as a proximal gradient method we took from \cite{Lu2014}.
The crucial argument in the convergence proofs is that after a finite number of iterations
the support of the iterates does not change anymore. Then the thresholding method
reduces to a gradient method on a fixed subspace, for which convergence is well-known.
Such an argument is not available in our case of $L^0$ minimization on a non-atomic measure space.
In parts, the proof can be carried over to  setting.
Let $(u_k)$ be a sequence generated by our algorithm. Due to the global minimization involved in the algorithm,
we can prove that the sequence $( f(u_k) + g(u_k))$ is decreasing and converging if $f$ is bounded from below, see \cref{lem28}.
In addition, we can show in \cref{thm313} that for each weakly converging subsequence $u_{n_k}\rightharpoonup u^*$ in $L^2(\Omega)$ it holds
\[
 f(u^*) + g(u^*) \le \liminf_{k\to\infty} ( f(u_{n_k}) + g(u_{n_k})) = \lim_{n\to\infty} ( f(u_n) + g(u_n)).
\]
This is surprising, since the functional $g$ is {\em not} sequentially weakly lower semicontinuous on $L^2(\Omega)$.

In the analysis, we require that the parameter $L$ is larger than the Lipschitz constant of $\nabla f$.
Using a suitable decrease condition, we can formulate a step-size selection strategy. The algorithm with this variable step sizes
enjoys the same convergence properties as the algorithm with fixed step size, see \cref{sec33}.

If $\nabla f$ is completely continuous and $\alpha>0$, then we can show that each weak limit point of $(u_k)$ is a strong limit point
in \cref{thm316}.
This limit point satisfies a certain inclusion, which is weaker than
the maximum principle, see \cref{lem317}.

Another idea to solve \eqref{eq001}--\eqref{eq002} is to consider the partially convexified problem,
that is replace $g$ by its biconjugate $g^{**}$.
Under standard assumptions, the partially convexified problem
\be\label{eq004}
 \min_{u\in U_{ad}} f(u) + g^{**}(u)
\ee
has global solutions.
In addition, many methods are available to solve \eqref{eq004}
Moreover, stationary point of the original problem
are stationary points of \eqref{eq004}.
If the original problem is unsolvable, then it is tempting to solve
the convexified problem instead. However in this unsolvable case,
solutions of the convexified problem are not fixed points of the hard-thresholding
method. We prove this surprising result in \cref{thm325}.

We report on the performance of our algorithm in \cref{sec4}.
As it turns out, our algorithm generates points with lower objective values for the original problem \eqref{eq001}--\eqref{eq002}
than the corresponding $L^1$-control problem \eqref{eq003}.

\section{Preliminary results}

\subsection{Notation and assumptions}

Let $\Omega\subset\R^n$ be an open and bounded set.
Let us mention that we can replace the spaces $L^p(\Omega)$ by
spaces of integrable functions $L^p(\mu)$ on an arbitrary measure space $(\Omega,\mathcal A, \mu)$ with $\mu(\Omega)<+\infty$.

\begin{assumption}\label{ass1}

We rely on the following standing assumptions.
\begin{enumerate}
 \item
 $f:L^2(\Omega) \to \R$ is weakly lower semicontinuous and bounded from below,
 \item
 $f:L^2(\Omega) \to \R$ is Fr\'echet differentiable, 
 \item
 $\nabla f$ is Lipschitz continuous from $L^2(\Omega)$ to $L^2(\Omega)$ with modulus $L_f$:
 \[
  \|\nabla f(u_1)-\nabla f(u_2)\|_{L^2(\Omega)} \le L_f \|u_1-u_2\|_{L^2(\Omega)}
 \]
 for all $u_1,u_2\in L^2(\Omega)$.
 \end{enumerate}
\end{assumption}

Here, we have in mind to choose $f$ to be a functional depending on the solution of a partial differential equation,
in which $u$ acts as a control.

\begin{example}\label{ex1}
The following example is covered by \cref{ass1}.
Let us define
\[
 f(u):= \int_\Omega L(x,y_u(x)) \dx,
\]
where $y_u\in H^1_0(\Omega)$ is defined to be the unique weak solution of the elliptic partial differential
equation
\[
 (Ay)(x) + d(x, y(x)) = u(x) \quad \text{ a.e.\ in }\Omega.
\]
Under standard assumptions on $A$, $L$ and $d$ (uniform ellipticity, bounded coefficients, Caratheodory property, differentiability, monotonicity of $d$ with respect to $y$,
boundedness on bounded sets),
see, e.g. \cite{CasasHerzogWachsmuth2012},
one can prove that $f$ satisfies \cref{ass1}.
The maximum principle holds for such problems as well, \cite{BonnansCasas1995}
\end{example}

\begin{example}\label{ex2}
Let $\Omega\subset\R^d$ be a bounded domain, let $T>0$ be given, set $I:=(0,T)$. Consider the parabolic equation
\[
\partial_t y  + Ay = u  \text{ on } I\times\Omega \times I, \ y(0) = y_0
\]
for an elliptic partial differential operator $A$
with homogeneous Dirichlet boundary conditions. For given $u\in L^2(I\times\Omega)$ the equation
admits a unique weak solution $y_u\in L^2(I,H^1_0(\Omega))$ with $\partial_t y_u \in L^2(I, H^{-1}(\Omega))$.
Under suitable assumptions on the integrands in
\[
 f(u) := \int_{I\times \Omega} L(t,x,y(x,t)) \dt\dx + \int_\Omega l(x,y(T,x)) \dx,
\]
the example fits into the framework of this article.
The maximum principle for control of parabolic equations was investigated in \cite{RaymondZidani1998}.
\end{example}

\begin{example}
Let us consider the parabolic equation as in \cref{ex2}.
Here, we want to study an actuator design problem: Determine a function $u_0\in L^2(\Omega)$
with small support together with an amplitude function $u_1\in L^\infty(I)$ such that the state associated to
the control $u(x,t):= u_0(x)u_1(t)$ minimizes $f(u)$.
That is, here we want to solve the following problem:
\[
 \min f(u_0,\, u_1) + \frac{\alpha_0}2 \|u_0\|_{L^2(\Omega)}^2 +  \frac{\alpha_1}2 \|u_1\|_{L^2(I)}^2  + \beta \|u_0\|_0
\]
subject to additional control constraints on $u_0$ and $u_1$.
This example can serve as an extension of so-called directional sparsity control problems \cite{HerzogStadlerWachsmuth2012}.
\end{example}

As already indicated in the introduction, we use a particular splitting of the objective functional.
Let us define
\be\label{eq103}
 g(u):= \frac \alpha 2 \|u\|_{L^2(\Omega)}^2 + \beta \|u\|_0.
\ee
Let us introduce some notation related to the term $\|u\|_0$.
For $u\in \R$ define
\[
 |u|_0 := \begin{cases} 0 & \text{ if } u=0,\\ 1 & \text{ if } u\ne0.\end{cases}
\]
Now, let $u\in L^2(\Omega)$ be given.
Then we set
\[
 \chi(u)(x):= |u(x)|_0 \quad \text{ f.a.a. } x\in \Omega,
\]
which implies
\[
 \|u\|_0 = \|\chi\|_{L^1(\Omega)}.
\]

\subsection{Existence of solutions and necessary optimality conditions}

First, let us show that $u\mapsto \|u\|_0$ is not weakly lower semicontinuous. To this end,
take $\Omega=(0,1)$ and define $u_k(x):=1+\operatorname{sign}(\sin(n\pi x))$.
Then it holds $\|u_k\|_0=1/2$ for all $k$. It is well known that $u_k \rightharpoonup u^*=1$ in $L^p(0,1)$ for all $p\in [1,\infty)$.
This shows $1/2 = \liminf \|u_k\|_0 < \|u^*\|_0$.
Hence, the direct method of the calculus of variations cannot be used to prove existence of solutions of \eqref{eq001}--\eqref{eq002}.

One possible modification of \eqref{eq001}--\eqref{eq002} is
\[
 \min f(u) + \frac \alpha 2 \|u\|_{H^1(\Omega)}^2 + \beta \|u\|_0
\]
with $\alpha>0$,
which was investigated by \cite{ItoKunisch2014}. Minimizing sequences of this problem
are bounded in $H^1(\Omega)$. Hence by compact embeddings one finds a subsequence that converges
weakly in $H^1(\Omega)$, strongly in $L^2(\Omega)$, and pointwise a.e.\ on $\Omega$.
Since $v \mapsto |v|_0$ is lower-semicontinuous on $\R$, this allows to apply standard arguments
to prove existence.
We will not follow this modification, as the proximal map of $\|u\|_0$ in $H^1(\Omega)$ cannot be computed
explicitly.

Necessary optimality conditions were proven in \cite[Thm. 2.2]{ItoKunisch2014} as well.
Using the method of needle perturbations, they prove that a locally optimal control
$\bar u$ satisfies the Pontryagin maximum principle. That is, $\bar u$ satisfies
\be\label{eqpmp}
\bar u(x) = \arg\min_{|u|\le b} \nabla f(\bar u)(x)\cdot u + \frac\alpha2|u|^2 + \beta|u|_0
%
\ee
for almost all $x\in \Omega$.
Conversely, if $\bar u$ is a feasible control satisfying this maximum principle \eqref{eqpmp},
then $\bar u$ is locally optimal provided that $f$ is convex and the measure of the set
$\{x: \ |\nabla f(\bar u)(x)| = \sqrt{2\alpha\beta}\}$ is zero, \cite[Thm. 2.7]{ItoKunisch2014}.
We refer here also to \cite{ChatterjeeNagaharaQuevedoRao2016}, where results of this type were
proven for an ODE control problem with free end-time.
Let us provide a short proof in a simplified situation. Of course, the maximum principle can be proven
for concrete control problems under much weaker assumptions.

\begin{theorem}[Pontryagin maximum principle]
Let $\bar u\in L^\infty(\Omega)$ be a local solution of the $L^0$-control problem \eqref{eq001}--\eqref{eq002} in $L^p(\Omega)$, $1\le p<\infty$.
Let $f$ satisfy
\[
 f(u) - f(\bar u) = \nabla f(\bar u)(u-\bar u)  + o(\|u-\bar u\|_{L^1(\Omega)})
\]
for all $u\in U_{ad}$.
Then the maximum principle \eqref{eqpmp} is satisfied for almost all $x\in \Omega$.
\end{theorem}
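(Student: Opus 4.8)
The plan is to prove a pointwise minimum principle by spike (needle) variations, exploiting that the nonsmooth part $g$ of the objective splits additively over $\Omega$. Introduce the pointwise Hamiltonian
\[
H(x,u) := \nabla f(\bar u)(x)\, u + \frac\alpha2 |u|^2 + \beta |u|_0, \qquad x\in\Omega,\ u\in\R,
\]
so that \eqref{eqpmp} asserts that $\bar u(x)$ minimizes $H(x,\cdot)$ over $\{|u|\le b\}$ for almost all $x$. Fix a constant $v$ with $|v|\le b$ and a measurable set $E\subset\Omega$, and define the admissible competitor $u_E:=v$ on $E$ and $u_E:=\bar u$ off $E$. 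Since $\|u\|_{L^2(\Omega)}^2$, $\|u\|_0=\|\chi(u)\|_{L^1(\Omega)}$, and the linear form $v'\mapsto\nabla f(\bar u)(v'-\bar u)$ are all integrals of pointwise quantities, the three increments collapse to integrals over $E$, and the first-order expansion of $f$ gives, for the objective $F:=f+g$ (all competitors being feasible),
\[
F(u_E) - F(\bar u) = \int_E \big(H(x,v) - H(x,\bar u(x))\big)\dx + o\big(\|u_E-\bar u\|_{L^1(\Omega)}\big).
\]
Because $v$ is a fixed constant and $\bar u\in L^\infty(\Omega)$, one has $\|u_E-\bar u\|_{L^1(\Omega)}\le C\,\meas(E)$, so the remainder is $o(\meas E)$; likewise $\|u_E-\bar u\|_{L^p(\Omega)}\to0$ as $\meas E\to0$, so $u_E$ eventually enters the $L^p$-neighbourhood on which $\bar u$ is locally optimal.

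Writing $\phi_v(x):=H(x,v)-H(x,\bar u(x))$, local optimality then yields $\int_E\phi_v\dx\ge -o(\meas E)$ for every sufficiently small measurable $E$. To convert this into a sign condition I would argue by contradiction in the spirit of a Lebesgue-density estimate: if $\meas\{\phi_v\le-\delta\}>0$ for some $\delta>0$, choose $E$ inside that set with $\meas E=t$ small; then $-\delta t\ge\int_E\phi_v\ge -o(t)$, forcing $\delta\le o(t)/t\to0$, a contradiction. Hence $\phi_v(x)\ge0$, i.e. $H(x,v)\ge H(x,\bar u(x))$, for almost every $x\in\Omega$, for each fixed constant $v$ with $|v|\le b$.

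The remaining, and most delicate, step is to pass from the quantifier order ``for each $v$, for a.e.\ $x$'' to ``for a.e.\ $x$, for all $u$''. I would fix a countable set $Q$ that is dense in $\{|u|\le b\}$ and contains $0$, discard the countable union of the exceptional null sets, and work at a fixed good point $x$, where $H(x,q)\ge H(x,\bar u(x))$ holds simultaneously for all $q\in Q$. The obstruction is that $u\mapsto H(x,u)$ is discontinuous at $u=0$ owing to the jump of $|u|_0$, so density alone does not immediately deliver minimality over all $u$. This I expect to be the main technical point, and it is resolved by separating the two regimes: for any $u\ne0$ with $|u|\le b$ the map $H(x,\cdot)$ is continuous at $u$, so approximating $u$ by points $q_k\in Q\setminus\{0\}$ gives $H(x,u)\ge H(x,\bar u(x))$; while the case $u=0$ is covered directly by taking $q=0\in Q$. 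Combining the two regimes shows that $\bar u(x)$ minimizes $H(x,\cdot)$ over $\{|u|\le b\}$ at almost every $x$, which is precisely \eqref{eqpmp}. The needle construction and the density-type sign argument are otherwise routine once the pointwise additive structure of $g$ is exploited; note also that this argument automatically rules out the degenerate case in which the pointwise infimum is $-\infty$, since that would contradict the finiteness of $H(x,\bar u(x))$ on a set of positive measure.
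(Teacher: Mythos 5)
Your proof is correct, and at its core it is the same needle-variation argument as the paper's: both exploit that $g$ and the linearization of $f$ split additively over $\Omega$, so that a spike perturbation yields an integral inequality plus an $o(L^1)$ remainder. Where you genuinely differ is in how that integral inequality is localized to a pointwise one. The paper spikes on balls $B_r(x)$, divides by $|B_r(x)|$, and lets $r\searrow 0$, invoking the Lebesgue differentiation theorem; you spike on arbitrary measurable subsets $E$ of the hypothetical bad set $\{\phi_v\le-\delta\}$ and reach a contradiction from $-\delta\meas(E)\ge\int_E\phi_v\dx\ge -o(\meas(E))$. Your route buys two things. First, it uses no differentiation-of-integrals machinery, so it carries over verbatim to any non-atomic finite measure space --- the generality the paper advertises at the start of Section 2, where balls and Lebesgue points are unavailable (non-atomicity is what guarantees subsets of arbitrarily small positive measure inside the bad set). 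Second, it forces you to confront explicitly the quantifier interchange from ``for each $v$, for a.e.\ $x$'' to ``for a.e.\ $x$, for all $v$'', which you resolve correctly with a countable dense set $Q\ni 0$ and the continuity of $u\mapsto\beta|u|_0$ away from the origin; the paper's proof leaves this step entirely implicit (its exceptional null set depends on the fixed $v$), and it is precisely where the discontinuity of $|\cdot|_0$ at $0$ could bite, so your treatment is the more careful one. What the paper's route buys in exchange is brevity: one inequality, one division, one limit. One small reading you should state explicitly: the hypothesis $f(u)-f(\bar u)=\nabla f(\bar u)(u-\bar u)+o(\|u-\bar u\|_{L^1(\Omega)})$ must be understood uniformly over admissible $u$ (for every $\varepsilon>0$ there is $\rho>0$ such that the remainder is at most $\varepsilon\|u-\bar u\|_{L^1(\Omega)}$ whenever $\|u-\bar u\|_{L^1(\Omega)}<\rho$), since your contradiction applies it along a whole family of sets $E$ rather than a single sequence; this is the natural reading, and the paper's ball argument relies on it in the same way.
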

\begin{proof}
Let us choose $x\in \Omega$, $r>0$, and $v\in \R$ with $|v|\le b$.
Set $\chi_r:= \chi_{B_r(x)}$ and $u_r:= (1-\chi_r)\bar u + \chi_r v$.
It follows  $u_r\in U_{ad}$.
By construction, we have
\[\begin{split}
\|u_r-\bar u\|_{L^1(\Omega)} &= \|\chi_r(v-\bar u)\|_{L^1(\Omega)} \\
& \le (|v| + \|\bar u\|_{L^\infty(\Omega)}) \|\chi_r\|_{L^1(\Omega)} \\
& = (|v| + \|\bar u\|_{L^\infty(\Omega)}) |B_r(x)|.
\end{split}
\]
This implies $u_r\to \bar u$ in $L^p(\Omega)$ for all $1\le p<\infty$ for $r\searrow 0$.
Hence it holds for all $r$ sufficiently small
\begin{multline*}
 0 \le f(u_r) + g(u_r) - f(\bar u) - g(\bar u)\\
 = \nabla f(\bar u)(u_r-\bar u) + o(\|u_r-\bar u\|_{L^1(\Omega)} )\\+ \frac\alpha2 \|u_r\|_{L^2(\Omega)}^2 - \frac\alpha2 \|\bar u\|_{L^2(\Omega)}^2
 + \beta\|u_r\|_0 - \beta \|u\|_0.
\end{multline*}
Here, we divide by $|B_r(x)|$ and pass to the limit $r\searrow0$.
Due to the estimate of $\|u-\bar u\|_{L^1(\Omega)}$ above and the Lebesgue differentiation theorem, we obtain
\[
 0\le \nabla f(\bar u)(x) \cdot (v-\bar u(x)) +  \frac\alpha2 |v| - \frac\alpha2 |\bar u(x)|
 + \beta|v|_0 - \beta |\bar u(x)|_0
\]
for almost all $x\in \Omega$, which is the claim.
\end{proof}

\subsection{Using $L^1$-optimization in case $\alpha=0$}
\label{sec23}

Following the seminal work \cite{Donoho2006}, it is commonly accepted that
sparse solutions of optimization problems can be obtained
using $L^1$-norms. This is true for problem \eqref{eq001} as well,
however, only for the case $\alpha=0$.
For positive parameter $\alpha>0$, this is no longer true.
Numerical results suggest, that controls obtained as solution of
problems with $L^1$-norm are clearly inferior to these obtained with
the thresholding algorithm, see \cref{sec44}.

Let us consider the $L^1$-control problem
\be\label{eq121}
 \min f(u) + \gamma \|u\|_{L^1(\Omega)}
\ee
subject to
\be\label{eq122}
u\in U_{ad} := \{ v\in L^2(\Omega):  |v(x)| \le b \text{ a.e.\ on } \Omega \}.
\ee
Here we assume $b>0$ and $\gamma>0$.
Due to $b>0$, the feasible set in \eqref{eq122} is bounded, and hence \eqref{eq121}--\eqref{eq122}
is solvable under \cref{ass1} on $f$.
The key observation of the connection between \eqref{eq121}--\eqref{eq122} and \eqref{eq001}--\eqref{eq002} (for $\alpha=0$)
is the following: For $u \in U_{ad}$ we have $\|u\|_{L^1(\Omega)} \le b \|u\|_0$.
If in addition $u$ satisfies
$u(x) \in \{-b,0,b\}$ almost everywhere, then
\[
b\|u\|_0 = \|u\|_{L^1(\Omega)}.
\]

\begin{theorem}
Suppose $\gamma = b^{-1} \beta$.
Let $u_{\gamma}$ be a global (or local in $L^2(\Omega)$) solution of the $L^1$-control problem \eqref{eq121}--\eqref{eq122}.
Suppose that
\[
 u_{\gamma}(x) \in \{-b,0,b\}
\]
holds for almost all $x\in \Omega$. Then $u_{\gamma}$ is a global (or local) solution of the
original control problem \eqref{eq001}--\eqref{eq002}.
\end{theorem}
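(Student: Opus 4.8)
The plan is to exploit the ``sandwich'' structure already hinted at by the key observation preceding the statement. Recalling that $\alpha=0$ throughout this subsection, I introduce the two cost functionals $J_0(u):= f(u) + \beta\|u\|_0$ (the objective of the $L^0$-problem \eqref{eq001}--\eqref{eq002}) and $J_1(u):= f(u) + \gamma\|u\|_{L^1(\Omega)}$ (the objective of \eqref{eq121}--\eqref{eq122}). The goal is to show $J_0(u_\gamma) \le J_0(u)$ for every admissible competitor $u$, either globally or in an $L^2$-neighborhood of $u_\gamma$.

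First I would record the two facts on $U_{ad}$ that follow from the choice $\gamma = \beta/b$ and the estimate $\|u\|_{L^1(\Omega)} \le b\|u\|_0$. Since $\gamma\|u\|_{L^1(\Omega)} = (\beta/b)\|u\|_{L^1(\Omega)} \le \beta\|u\|_0$, we have
\[
 J_1(u) \le J_0(u) \quad \text{for all } u\in U_{ad},
\]
with equality whenever $u(x)\in\{-b,0,b\}$ a.e., because in that case $\|u\|_{L^1(\Omega)} = b\|u\|_0$. In particular, the hypothesis $u_\gamma(x)\in\{-b,0,b\}$ a.e.\ yields the crucial identity $J_1(u_\gamma) = J_0(u_\gamma)$ at the minimizer itself.

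The argument then closes by chaining these facts: for any admissible competitor $u$,
\[
 J_0(u_\gamma) = J_1(u_\gamma) \le J_1(u) \le J_0(u),
\]
where the middle inequality is the (global or local) minimality of $u_\gamma$ for the $L^1$-problem and the final inequality is the first recorded fact. For the global statement I let $u$ range over all of $U_{ad}$; for the local statement I restrict $u$ to the very same $L^2$-ball in which $u_\gamma$ is $L^1$-optimal, which suffices since both notions of minimality are taken in the topology of $L^2(\Omega)$, so no adjustment of the neighborhood is required.

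As for the main obstacle: there is no serious analytic difficulty here. The entire content lies in the elementary observation that the $L^1$-cost underestimates the $L^0$-cost on $U_{ad}$ and matches it exactly on controls valued in $\{-b,0,b\}$. The only point that genuinely uses the hypothesis is that the equality $J_1(u_\gamma)=J_0(u_\gamma)$ hold \emph{at the specific minimizer} $u_\gamma$, which is precisely what the bang-bang-or-off structure guarantees; compatibility of the local versions is automatic because both problems are posed over the same space.
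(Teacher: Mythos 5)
Your proof is correct and follows essentially the same route as the paper: both rest on the inequality $\gamma\|u\|_{L^1(\Omega)} \le \beta\|u\|_0$ on $U_{ad}$, the equality at $u_\gamma$ forced by the $\{-b,0,b\}$ structure, and the chaining of these with the $L^1$-optimality of $u_\gamma$ (with the local case handled by restricting competitors to the same $L^2$-neighborhood). Your explicit $J_0$/$J_1$ sandwich notation is merely a cosmetic repackaging of the paper's argument, and it even cleans up a small typo in the paper's displayed equality ($\beta\|u\|_0$ where $\beta\|u_\gamma\|_0$ is meant).
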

\begin{proof}
Suppose $u_{\gamma}$ is a global minimum of \eqref{eq121}--\eqref{eq122}.
Let $u\in U_{ad}$ be a feasible control. This implies $\|u\|_{L^1(\Omega)} \le b \|u\|_0$.
By optimality of $u_{\gamma}$, we get
\[
f(u_{\gamma}) + \gamma\|u_{\gamma}\|_{L^1(\Omega)} \le f(u) + \gamma b \|u\|_0 = f(u) + \beta \|u\|_0.
\]
Due to the assumptions on $\gamma$ and $u_{\gamma}$, we get
$ \gamma\|u_{\gamma}\|_{L^1(\Omega)} = \beta \|u\|_0$, which proves the claim.
The claim for local solutions follows by restricting $u$ to a neighborhood of $u_{\gamma}$.
\end{proof}

\section{The iterative hard thresholding method}

We present and analyze the thresholding method with fixed step-size first.
The method with variable step-size is analyzed in \cref{sec33}.
Let us recall the splitting of the objective functional in \eqref{eq001} in $f(u)+g(u)$,
where $g(u)$ is defined in \eqref{eq103}.

\begingroup
\renewcommand\thealgo{\textbf{IHT}}
\begin{algo}[Iterative hard thresholding algorithm]\label{A1}
Choose $L>0$, $u_0\in U_{ad}$. Set $k=0$.
\begin{enumerate}
 \item Compute $u_{k+1}$ as global solution of
 \[
  \min_{u\in U_{ad}} f(u_k) + \nabla f(u_k)(u-u_k) + \frac L2 \|u-u_k\|_{L^2(\Omega)}^2 + g(u).
 \]
 \item Set $k:=k+1$ and go to step 1.
\end{enumerate}
\end{algo}
\endgroup

Despite the non-convexity of $|\cdot|_0$, the subproblem in \cref{A1}
is uniquely solvable. Its solution is given by the so-called hard-thresholding operator in
the case $b=+\infty$ and $\alpha=0$.
Before analyzing this operator, let us state the following elementary result.

\begin{corollary}
 Let $u^*\in U_{ad}$ satisfy the maximum principle \eqref{eqpmp}.
 Then $u^*$ is a fixed point of \cref{A1} for all $L\ge0$.
\end{corollary}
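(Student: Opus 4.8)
The plan is to show that if \cref{A1} is initialized with $u_k = u^*$, then the unique minimizer $u_{k+1}$ of the subproblem equals $u^*$. Setting $u_k = u^*$ and discarding the additive constants $f(u^*)$ and $\int_\Omega \nabla f(u^*)(x)\,u^*(x)\dx$, the subproblem reduces to minimizing over $u\in U_{ad}$ the functional
\[
\int_\Omega \left[ \nabla f(u^*)(x)\, u(x) + \frac L2 \bigl(u(x)-u^*(x)\bigr)^2 + \frac\alpha2 u(x)^2 + \beta\,|u(x)|_0 \right] \dx.
\]
Every term is the integral of a function of the pointwise value $u(x)$ only, and the constraint $u\in U_{ad}$ is the pointwise constraint $|u(x)|\le b$, so this integral is minimized exactly when its integrand is minimized for almost every $x$. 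It therefore suffices to show that $v = u^*(x)$ minimizes, for almost all $x$, the scalar function
\[
\phi_x(v) := \nabla f(u^*)(x)\, v + \frac L2 (v - u^*(x))^2 + \frac\alpha2 v^2 + \beta\,|v|_0 \quad \text{over } |v|\le b.
\]

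First I would record the decomposition $\phi_x(v) = \psi_x(v) + \frac L2 (v - u^*(x))^2$, where $\psi_x(v) := \nabla f(u^*)(x)\, v + \frac\alpha2 v^2 + \beta\,|v|_0$ is precisely the scalar objective appearing in the maximum principle \eqref{eqpmp}. By hypothesis $u^*$ satisfies \eqref{eqpmp}, so $u^*(x)$ minimizes $\psi_x$ over $[-b,b]$ for almost every $x$, i.e.\ $\psi_x(v)\ge\psi_x(u^*(x))$ for all admissible $v$. The extra quadratic term is nonnegative and vanishes exactly at $v = u^*(x)$, whence for every $v\in[-b,b]$,
\[
\phi_x(v) = \psi_x(v) + \frac L2 (v - u^*(x))^2 \ge \psi_x(u^*(x)) = \phi_x(u^*(x)).
\]
Thus $u^*(x)$ minimizes $\phi_x$ for almost all $x$, so $u^*$ solves the subproblem. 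Since the subproblem is uniquely solvable, $u_{k+1} = u^*$, i.e.\ $u^*$ is a fixed point of \cref{A1}. The argument is uniform in $L\ge0$: the perturbation $\frac L2 (v-u^*(x))^2$ is nonnegative and annihilated at $v = u^*(x)$ for every $L\ge0$, and for $L=0$ the pointwise objective coincides with $\psi_x$ outright.

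There is essentially no serious obstacle: the content is the elementary observation that adding a nonnegative term vanishing at a candidate point cannot destroy a minimizer. The only points meriting a word of care are the reduction of the integral minimization to pointwise minimization, justified by the separable pointwise structure of both the objective and the admissibility constraint, and the appeal to the uniqueness of the subproblem solution asserted in the text, which upgrades ``$u^*$ is a minimizer'' to ``$u^*$ is the minimizer'' and hence a genuine fixed point.
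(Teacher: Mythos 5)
Your proposal is correct and coincides with the elementary argument the paper intends --- the paper states this corollary without proof precisely because, pointwise, the subproblem objective is the maximum-principle objective of \eqref{eqpmp} plus the nonnegative term $\frac L2 (v-u^*(x))^2$ vanishing at $v=u^*(x)$, so minimality is preserved for every $L\ge 0$. One small remark: the appeal to uniqueness of the subproblem solution is dispensable (and in fact uniqueness can fail on sets where the thresholding map is multivalued); showing that $u^*$ is \emph{a} global solution of the subproblem already makes $u^*$ an admissible choice of $u_{k+1}$, which is all that the fixed-point property requires.
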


\subsection{Analysis of the scalar-valued case}

The global minimization in step 1 of \cref{A1} can be carried out pointwise.
Hence, we first analyze the corresponding optimization problem in $\R$.

\begin{definition}
 Let $t>0$ be given.  Define the set-valued mapping $H_t :\R \rightrightarrows \R$ as follows
 \[
  H_t(q) :=  \begin{cases}\{ q\} & \text{ if } |q| > t,\\
              \{0,\ q\}  & \text{ if } |q| = t,\\
              \{0\} & \text{ if } |q| < t.\\
             \end{cases}
 \]
\end{definition}

 The mapping $H_t$ is called the hard-thresholding operator.
It can be characterized as the solution mapping of an optimization problem.

\begin{lemma}
Let $s>0$, $q\in \R$ be given.  Then it holds $u\in H_{\sqrt{2s}}(-q)$
if and only if $u$ is a global solution of
\be\label{eq203}
 \min qu + \frac12 u^2 + s|u|_0.
\ee
\end{lemma}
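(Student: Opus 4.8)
The plan is to reduce the claim to an elementary one-dimensional comparison. I would write the objective as $\phi(u) := qu + \frac12 u^2 + s|u|_0$ and exploit that $|u|_0$ takes only the two values $0$ and $1$, so that $\phi$ consists of two smooth quadratic pieces. Accordingly I would split $\R$ into the point $\{0\}$ and the open set $\R\setminus\{0\}$, compute the infimum of $\phi$ on each piece separately, and then compare the two candidate values.

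On the piece $u=0$ one simply has $\phi(0)=0$. On the piece $u\neq 0$ one has $\phi(u)=qu+\frac12 u^2 + s$, and completing the square gives $\phi(u)=\frac12(u+q)^2 - \frac12 q^2 + s$. The unconstrained minimizer of this quadratic is $u=-q$; if $q\neq 0$ this point lies in the admissible piece $u\neq 0$ and yields the value $-\frac12 q^2 + s$. (If $q=0$ the quadratic is minimized only at the excluded point, so the infimum over $u\neq 0$ equals $s$ and is not attained, and the unique global minimizer is clearly $u=0$, consistent with what follows.)

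I would then compare the two candidate values $0$ and $-\frac12 q^2 + s$. The global minimum sits at $u=0$ exactly when $0\le -\frac12 q^2 + s$, i.e. $q^2\le 2s$, i.e. $|q|\le \sqrt{2s}$, and it sits at $u=-q$ exactly when $|q|\ge\sqrt{2s}$. Reading off the minimizer set in the three regimes gives $\{0\}$ for $|q|<\sqrt{2s}$, the pair $\{0,-q\}$ for $|q|=\sqrt{2s}$ (the two pieces tie at value $0$), and $\{-q\}$ for $|q|>\sqrt{2s}$. Since $|-q|=|q|$, this is precisely $H_{\sqrt{2s}}(-q)$ by the definition of the hard-thresholding operator, which establishes the equivalence in both directions.

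The only point requiring care—rather than a genuine obstacle—is the boundary case $|q|=\sqrt{2s}$, where one must observe that the two smooth pieces attain the \emph{same} optimal value, so that both $0$ and $-q$ are global solutions; this is exactly the set-valued branch in the definition of $H_t$. Keeping track of the sign flip (the operator is evaluated at $-q$, not at $q$) is the other bookkeeping detail to watch.
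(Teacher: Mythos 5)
Your proof is correct and takes essentially the same approach as the paper: both arguments compare the minimum value $-\tfrac12 q^2+s$ of the quadratic branch (attained at $u=-q$) against the value $0$ at $u=0$, which produces the threshold $|q|=\sqrt{2s}$. In fact your write-up is more careful than the paper's terse version, since you explicitly handle the degenerate case $q=0$ and the tie case $|q|=\sqrt{2s}$, where both $0$ and $-q$ are global minimizers, matching the set-valued branch of $H_t$.
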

\begin{proof}
 The minimum of the quadratic function $u\mapsto qu + \frac12 u^2 + s$ is at $-q$ with value $-\frac12 q^2 +s$.
 Thus the global minimum of the auxiliary problem \eqref{eq203} is at
 $-q$ if $q^2\ge 2s$. In the case $q^2\le 2s$ the point $u=0$ is a global solution.
\end{proof}

Next we incorporate inequality constraints into \eqref{eq203},
which gives rise to an extension of the operator $H_t$.

\begin{lemma}\label{lem23}
Let $s\ge0$, $b\in(0,+\infty]$, $q\in \R$ be given.  Then
$u$ is a global solution of
\be \label{eq204}
 \min_{u: \ |u|\le b} qu + \frac12 u^2 + s|u|_0.
\ee
if and only if $u$ satisfies
one of the conditions
\begin{enumerate}
 \item\label{lem23_1} $u=-b$ and $q \ge \max(b,b/2+s/b)$,
 \item\label{lem23_2} $u=b$ and $q \le -\max(b,b/2+s/b)$,
 \item\label{lem23_3} $u=-q$ and $ \sqrt{2s}  \le |q| \le b$,
 \item\label{lem23_4} $u=0$, $b\le\sqrt{2s}$, and $|q| \le b/2+s/b$,
 \item\label{lem23_5} $u=0$, $b\ge\sqrt{2s}$,  and $|q| \le \sqrt{2s}$.
\end{enumerate}
\end{lemma}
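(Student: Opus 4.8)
The plan is to exploit that the objective in \eqref{eq204} is, away from the origin, a single parabola with a downward jump at $u=0$. Writing $\psi(u):=qu+\frac12u^2+s$ for the smooth part, the objective coincides with $\psi(u)$ for every $u\ne0$, while at $u=0$ it takes the value $0$. Since $\psi(0)=s\ge0$, the effect of the term $s|u|_0$ is only to lower the value at the single point $u=0$ from $s$ to $0$. Hence every global solution is either $u=0$, with objective value $0$, or a minimizer of $\psi$ over the nonzero feasible points, and the global minimum equals the smaller of these two competing values; ties produce several global solutions. I would first record existence: $u\mapsto|u|_0$ is lower semicontinuous on $\R$, so the objective is lower semicontinuous as well, and for $b<+\infty$ the feasible interval $[-b,b]$ is compact, while for $b=+\infty$ the quadratic growth of $\psi$ yields coercivity; in either case a global minimizer exists.

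Next I would locate the minimizer of $\psi$ on $[-b,b]$. As $\psi$ is a parabola with vertex at $u=-q$, its constrained minimizer is the projection $\proj_{[-b,b]}(-q)$, namely $-q$ if $|q|\le b$ and the nearer endpoint otherwise. By the symmetry $(q,u)\mapsto(-q,-u)$, which leaves the objective and the constraint invariant, it suffices to treat $q\ge0$; the case $q\le0$ then follows by reflection, giving \ref{lem23_2} and the $u=-q>0$ branch of \ref{lem23_3}. For the interior case $0\le q\le b$ the competing nonzero value is $\psi(-q)=-\frac12q^2+s$, which is $\le0$ precisely when $q\ge\sqrt{2s}$; this produces the interior solution $u=-q$ of \ref{lem23_3}, whereas $q\le\sqrt{2s}$ forces $u=0$. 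For the boundary case $q>b$ the competing value is $\psi(-b)=-qb+\frac12b^2+s$, and dividing $\psi(-b)\le0$ by $b>0$ gives the threshold $q\ge\frac b2+\frac sb$, which yields the solution $u=-b$ of \ref{lem23_1}.

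It remains to reconcile the two thresholds $\sqrt{2s}$ and $\frac b2+\frac sb$ and to explain the distinction $b\le\sqrt{2s}$ versus $b\ge\sqrt{2s}$ appearing in \ref{lem23_4}--\ref{lem23_5}. The elementary equivalence $\frac b2+\frac sb\ge b\iff b^2\le2s\iff b\le\sqrt{2s}$ shows that $\max(b,\frac b2+\frac sb)$ equals $\frac b2+\frac sb$ when $b\le\sqrt{2s}$ and equals $b$ when $b\ge\sqrt{2s}$, matching the quantity in \ref{lem23_1}--\ref{lem23_2}. If $b\ge\sqrt{2s}$, then for $q>b$ one has $q>b\ge\frac b2+\frac sb$, so the boundary value is strictly negative and $u=-b$ is optimal, whence $u=0$ is optimal exactly for $|q|\le\sqrt{2s}$, which is \ref{lem23_5}. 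If instead $b\le\sqrt{2s}$, then for $|q|\le b$ one has $\psi(-q)\ge-\frac12b^2+s\ge0$, so the interior vertex never makes a nonzero point strictly better than $u=0$, and the transition is governed by the boundary threshold $\frac b2+\frac sb$; hence $u=0$ is optimal exactly for $|q|\le\frac b2+\frac sb$, which is \ref{lem23_4}.

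I expect the principal difficulty to be the bookkeeping rather than any single estimate: one must check that \ref{lem23_1}--\ref{lem23_5} cover the whole parameter range $(s,b,q)$ without gaps and agree on the boundary cases, where two conditions hold simultaneously and correctly report multiple global solutions --- for instance $|q|=\sqrt{2s}\le b$ with $u\in\{0,-q\}$, or $q=\frac b2+\frac sb>b$ with $u\in\{0,-b\}$. Keeping track of which threshold is binding, equivalently of the sign of $b-\sqrt{2s}$, is the crux that makes the case analysis exhaustive and consistent with the stated items.
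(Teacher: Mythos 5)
Your proposal is correct and takes essentially the same route as the paper's proof: reduce to one sign of $q$ by symmetry, compare the value $0$ at the origin against the parabola's value at the projection of the vertex $-q$ onto $[-b,b]$ (i.e.\ $\psi(-q)=-\tfrac12 q^2+s$ in the interior case, $\psi(-b)=-qb+\tfrac12 b^2+s$ in the boundary case), and resolve the two thresholds $\sqrt{2s}$ and $b/2+s/b$ via the elementary equivalence $b\le\sqrt{2s}\iff b\le b/2+s/b$. Your explicit existence argument and the bookkeeping of tie cases are refinements of, not departures from, the paper's argument.
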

\begin{proof}
The point $-q$ is the unconstrained minimum of $u\mapsto qu + \frac12 u^2 + s$.
Let us discuss the case $q>0$ only. Hence the global minimum of \eqref{eq204} is either at $0$, $-q$, or $-b$.
The value of the objective function at $u=-b$ is equal to $-qb + \frac12 b^2 + s$.
If $-q\le-b$ then $-qb + \frac12 b^2 + s$ is non-positive  if and only if
$q \ge b/2+s/b$.

Hence $u=-b$ is the global solution if $q\ge \max(b,b/2+s/b)$.
If $b\le q\le b/2+s/b$ then $u=0$ is optimal. In case $q<b$ the point $H_{\sqrt{2s}}(-q)$ is optimal.
This implies $u=0$ if and only if $0\le q \le \min(\sqrt{2s},b)$ or $b\le q \le b/2+s/b$ holds.
Due to $b>0$, it holds $\sqrt{2s} \le b/2+s/b$.
Since $b<\sqrt{2s}$ is equivalent to $b<b/2+s/b$, we obtain conditions \ref{lem23_4} and \ref{lem23_5}.
\end{proof}

\begin{definition}
 Let $s>0,b>0$ be given.  Let us define the set-valued mapping $H_{s,b} :\R \rightrightarrows \R$ as follows
 \[
  u\in H_{s,b}(q) \ \Leftrightarrow \ (u,-q) \text{ satisfies conditions \ref{lem23_1}--\ref{lem23_5} of \cref{lem23}}.
 \]
\end{definition}

\begin{corollary}\label{cor35}
 The mapping $q\rightrightarrows H_{s,b}(q)$ is monotone with closed graph.
\end{corollary}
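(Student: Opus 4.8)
The plan is to avoid the explicit five-way case distinction of \cref{lem23} entirely and work instead from the variational characterization of $H_{s,b}$. By the defining equivalence together with \cref{lem23}, $u\in H_{s,b}(q)$ holds exactly when $u$ is a global solution of
\[
 \min_{|u|\le b}\ \tfrac12 u^2 - q u + s|u|_0,
\]
that is, when $u$ minimizes $\psi(u) - q u$ over the feasible set $\{|u|\le b\}$, where $\psi(u):=\tfrac12 u^2 + s|u|_0$ is \emph{independent of} $q$. Both monotonicity and closedness of the graph will follow from this single observation.

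For monotonicity I would pick $u_i\in H_{s,b}(q_i)$ for $i=1,2$ and test each minimizer against the other (both lie in the common feasible set $\{|u|\le b\}$), obtaining the two optimality inequalities
\[
 \psi(u_1) - q_1 u_1 \le \psi(u_2) - q_1 u_2, \qquad \psi(u_2) - q_2 u_2 \le \psi(u_1) - q_2 u_1.
\]
Summing these and cancelling the $q$-independent quantity $\psi(u_1)+\psi(u_2)$, which appears on both sides, leaves $-q_1u_1 - q_2u_2 \le -q_1u_2 - q_2u_1$, which rearranges to $(q_1-q_2)(u_1-u_2)\ge 0$. This is precisely monotonicity. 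The point worth stressing is that the non-convexity of $|\cdot|_0$ is irrelevant here, since the corresponding terms cancel; monotonicity is inherited purely from the quadratic part $\tfrac12 u^2$ being subtracted by the linear term $qu$.

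For the closed graph I would take a convergent sequence $(q_n,u_n)\to(q,u)$ with $u_n\in H_{s,b}(q_n)$ and show $u\in H_{s,b}(q)$. First $|u|\le b$, because the feasible set is closed. Next, for every fixed admissible $v$, optimality of $u_n$ gives $\tfrac12 u_n^2 - q_n u_n + s|u_n|_0 \le \tfrac12 v^2 - q_n v + s|v|_0$, and I would pass to the $\liminf$ as $n\to\infty$. The quadratic-linear parts converge by continuity, while the term $s|u_n|_0$ is controlled by lower semicontinuity of $t\mapsto|t|_0$ on $\R$, which yields $\liminf_n |u_n|_0 \ge |u|_0$. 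Combining these through superadditivity of $\liminf$ produces $\tfrac12 u^2 - q u + s|u|_0 \le \tfrac12 v^2 - q v + s|v|_0$ for all admissible $v$, so $u$ is a global minimizer and hence $u\in H_{s,b}(q)$.

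The main obstacle is exactly this last limit passage: $|\cdot|_0$ is not continuous, since $u_n\ne 0$ with $u_n\to 0$ gives $|u_n|_0=1\ne 0=|u|_0$, so one cannot simply pass to the limit term by term. The resolution is that $|\cdot|_0$ enters with a positive weight on the side being minimized and is lower semicontinuous, so the $\liminf$ inequality points in the favorable direction; no stronger continuity of $|\cdot|_0$ is needed.
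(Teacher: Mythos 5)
Your proof is correct, but it takes a genuinely different route from the paper. The paper's proof works directly with the explicit five-condition description from \cref{lem23}: it splits into the parameter regimes $\sqrt{2s}=b$, $b<\sqrt{2s}$, and $b>b/2+s/b$ (after ruling out $\sqrt{2s}<b<b/2+s/b$), and reads off monotonicity of the resulting piecewise graph in each case, with \cref{fig1} as a guide; closedness of the graph is implicit there, since all conditions in \cref{lem23} are non-strict inequalities. You instead exploit only the variational characterization $u\in H_{s,b}(q)\iff u\in\arg\min_{|v|\le b}\bigl(\tfrac12 v^2 - qv + s|v|_0\bigr)$: monotonicity follows from the standard cross-testing of two minimizers (the nonconvex terms $\psi(u_1)+\psi(u_2)$ cancel), and closedness from lower semicontinuity of $t\mapsto|t|_0$ together with continuity of the quadratic-linear part. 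Your argument is shorter, avoids all case distinctions, handles the closed-graph claim explicitly and rigorously, and generalizes immediately: any map of the form $q\rightrightarrows\arg\min_{v\in C}\bigl(\psi(v)-qv\bigr)$ with $C$ closed and $\psi$ lower semicontinuous is monotone with closed graph, which would also cover, e.g., the prox maps arising in the switching-control example. What the paper's computational proof buys in exchange is the concrete shape of the graph in each parameter regime, which is what is actually needed later to draw \cref{fig1}, to construct the maximal monotone extension $\tilde H_{s,b}$, and to carry out the fixed-point analysis of \cref{lem317}; your abstract argument yields the corollary itself but not that structural information.
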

\begin{proof}
Let us first argue that  $\sqrt{2s} < b < b/2 + s/b$ is impossible.
Indeed, these inequalities imply $b^2 > 2s$ and $b^2 < b^2/2 + s$, where the latter is equivalent to
$b^2 < 2s$.

Second, let us consider the exceptional case $\sqrt{2s}=b$, which is equivalent to $b=b/2+s/b$. In this case $H_{s,b}$ is monotone.

Third, let us assume $b < b/2 + s/b$ and $\sqrt{2s}\ne b$. Then $b< \sqrt{2s}$ follows.
Hence item \ref{lem23_3} is void, and we obtain monotonicity, see also the left plot of  \cref{fig1}.

Finally, suppose $b > b/2 + s/b$. Then $b/2 + s/b$ disappears in the system of \cref{lem23},
and monotonicity is proven. This case can be seen in the right plot of  \cref{fig1}.
\end{proof}

\begin{figure}[htb]
 \includegraphics[width=0.49\textwidth]{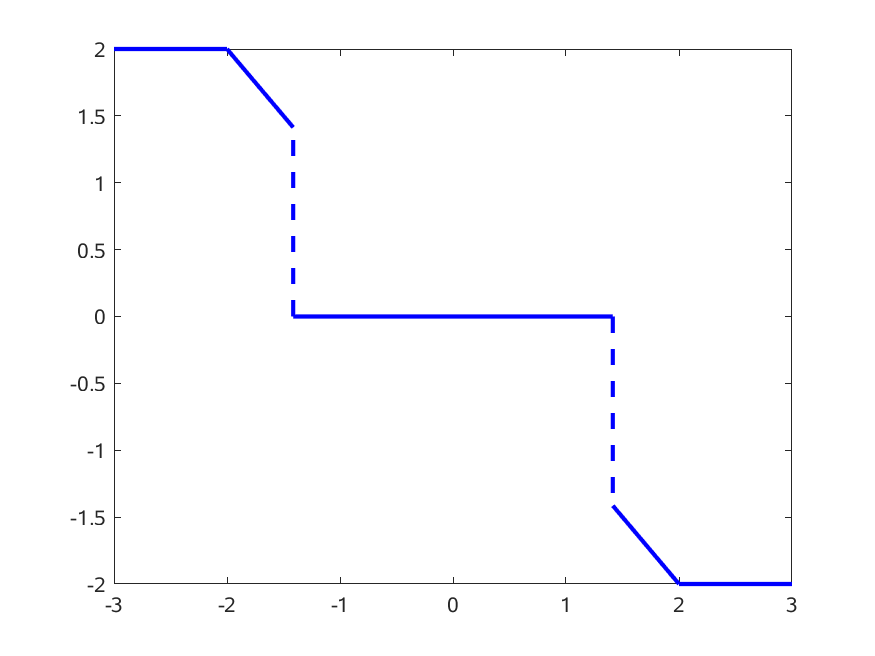}
 \includegraphics[width=0.49\textwidth]{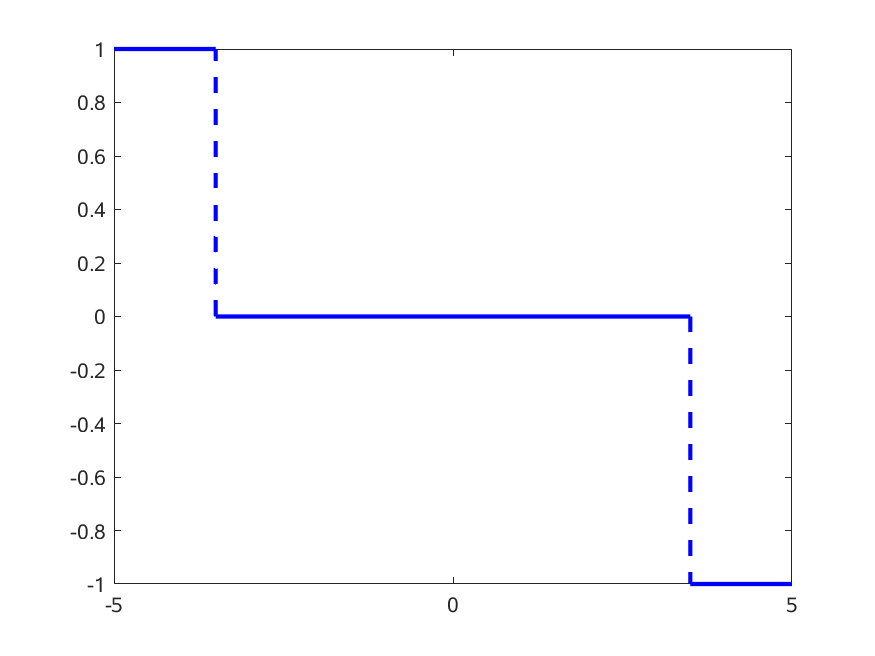}
 \caption{The set-valued map $H_{s,b}$ and its maximal monotone extension $\tilde H_{s,b}$ (dashed lines)
 for parameters $(s,b) = (1,2)$ (left) and $(s,b)=(3,1)$ (right).}
 \label{fig1}
\end{figure}

We see that $q\rightrightarrows H_{s,b}(q)$ is monotone with closed graph but not maximal monotone.
For the unconstrained case $b=+\infty$, we have $H_{s,+\infty}(q) = H_{\sqrt{2s}}(-q)$,
where we used the convention $\max(+\infty, +\infty/2)=+\infty$,
so that items \ref{lem23_1}, \ref{lem23_2}, and \ref{lem23_4} in \cref{lem23} are void.

\begin{definition}
 For $s>0$ and $b\in [0,+\infty]$ we will denote by $\tilde H_{s,b}$ the maximal monotone extension
 of $H_{s,b}$, see also \cref{fig1}.
\end{definition}

Since $H_{s,b}(q)\ne\emptyset$  for all $q\in \R$, it follows that the maximal monotone extension $\tilde H_{s,b}$
is uniquely defined, cf., \cite[Thm.\ 1]{Qi1983}

We close the section with an application to the minimization of
a real-valued prototype of the functional, which is minimized
in each step of the algorithm.

\begin{corollary}\label{cor25}
Let $\beta>0$, $b\in[0,+\infty)$, $g_k\in \R$, $L>0$, $\alpha\ge0$ be given.  Then
$u\in \R$ is a global solution of
\be \label{eq205}
 \min_{u: \ |u|\le b} g_ku + \frac L2 (u-u_k)^2 + \frac\alpha2|u|^2+ \beta|u|_0.
\ee
if and only if $u$ satisfies
 \[
  u \in H_{ \frac\beta{L+\alpha},b}\left(\frac1{L+\alpha}(L u_k-  g_k)\right).
 \]
In addition, all global solutions $u$ of \eqref{eq205}
satisfy
\[
 u=0 \text{ or } |u|\ge \sigma
\]
with $\sigma:= \min(b,\sqrt{\frac{2\beta}{L+\alpha} })$.

Moreover, if $u\ne0$ is a global solution of \eqref{eq205} then it holds
\be\label{eq206}
 (g_k + L (u-u_k) + \alpha u)\cdot (v-u) \ge 0
\ee
for all $|v|\le b$.
\end{corollary}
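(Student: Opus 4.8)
The plan is to reduce the scalar problem \eqref{eq205} to the already-solved problem \eqref{eq204} by completing the square. Expanding $\frac L2(u-u_k)^2$ and collecting terms, the objective in \eqref{eq205} equals
\[
 (g_k - L u_k)\,u + \frac{L+\alpha}{2}\,u^2 + \beta|u|_0 + \tfrac L2 u_k^2 .
\]
The last summand is independent of $u$ and may be discarded. Since $L+\alpha>0$, dividing by $L+\alpha$ leaves the set of minimizers unchanged, and one arrives exactly at \eqref{eq204} with $s=\frac{\beta}{L+\alpha}$ and $q=\frac{g_k-L u_k}{L+\alpha}$. By \cref{lem23}, the global solutions are precisely the $u$ for which the pair $(u,q)$ satisfies one of the conditions \ref{lem23_1}--\ref{lem23_5}, and by the definition of $H_{s,b}$ this reads $u\in H_{s,b}(-q)$. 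As $-q=\frac1{L+\alpha}(L u_k-g_k)$ and $s=\frac{\beta}{L+\alpha}$, this is the asserted characterization. The only subtlety here is the sign convention built into $H_{s,b}$, which evaluates the conditions of \cref{lem23} at the negative of its argument; the degenerate value $b=0$ is trivial, as then $u=0$ is the only feasible point.

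The dichotomy $u=0$ or $|u|\ge\sigma$ is read off directly from the five cases of \cref{lem23}. In cases \ref{lem23_1} and \ref{lem23_2} one has $|u|=b$, in case \ref{lem23_3} one has $|u|=|q|\ge\sqrt{2s}=\sqrt{\frac{2\beta}{L+\alpha}}$, and in cases \ref{lem23_4} and \ref{lem23_5} one has $u=0$. Consequently every nonzero solution satisfies $|u|\ge\min\bigl(b,\sqrt{2s}\bigr)=\sigma$, which is the claim.

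For the variational inequality I would write $\phi(v):=g_k v+\frac L2(v-u_k)^2+\frac\alpha2 v^2$ for the smooth part, so that the objective in \eqref{eq205} is $\phi+\beta|\cdot|_0$ and $\phi'(v)=g_k+L(v-u_k)+\alpha v$ is exactly the left factor in \eqref{eq206}. Let $u\ne0$ be a global solution. Comparing $u$ with any feasible $v\ne0$, the terms $\beta|\cdot|_0$ cancel and yield $\phi(v)\ge\phi(u)$, while comparing $u$ with $v=0$ gives $\phi(0)\ge\phi(u)+\beta\ge\phi(u)$. Hence $u$ minimizes the differentiable convex function $\phi$ over the convex interval $\{|v|\le b\}$, and the standard first-order optimality condition $\phi'(u)(v-u)\ge0$ for all $|v|\le b$ is precisely \eqref{eq206}. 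The whole argument is essentially bookkeeping on top of \cref{lem23}; the one point worth isolating is this last observation, namely that a \emph{nonzero} minimizer of the penalized functional is automatically a minimizer of its smooth part, after which the inequality \eqref{eq206} is immediate.
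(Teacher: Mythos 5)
Your proposal is correct and follows essentially the same route as the paper: reduce \eqref{eq205} to \eqref{eq204} by completing the square and rescaling by $L+\alpha$ (so $s=\frac{\beta}{L+\alpha}$, $-q=\frac{1}{L+\alpha}(Lu_k-g_k)$), read the dichotomy $u=0$ or $|u|\ge\sigma$ off the cases of \cref{lem23}, and observe that a nonzero global minimizer of the penalized functional is a global minimizer of the smooth ($\beta=0$) problem, whose standard first-order condition is \eqref{eq206}. Your write-up merely fills in the comparison with $v\ne 0$ and $v=0$ that the paper leaves implicit, and handles the degenerate case $b=0$ explicitly.
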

\begin{proof}
 The minimization problem \eqref{eq205} is equivalent to
 \[
   \min_{u: \ |u|\le b} -\frac1{L+\alpha}(L u_k-  g_k)u + \frac 12 u^2 + \frac\beta{L+\alpha}|u|_0.
 \]
 Then the first claim follows from the construction of $H_{ \frac\beta{L+\alpha},b}$.
 In addition, the second claim follows from \cref{lem23}.
 For a different derivation of $\sigma$ see \cite[Lemma 3.3(i)]{Lu2014}.

 If $u\ne0$ is a global solution of  \eqref{eq205} then it is also a global solution of
 \eqref{eq205} for $\beta=0$. This is a convex problem, and its necessary optimality condition is
 the claimed inequality.
\end{proof}

\subsection{Analysis of the IHT iteration}
\label{sec32}

Let us transfer the results of the previous section to the infinite-dimensional setting.
Recall that $u_{k+1}$ is chosen as a global solution of
\be\label{eq221}
  \min_{u\in U_{ad}} f(u_k) + \nabla f(u_k)(u-u_k) + \frac L2 \|u-u_k\|_{L^2(\Omega)}^2 + g(u).
\ee
Under our standing assumptions, the functional can be written as
an integral function. The integrand can be minimized explicitly.
The following result characterizes all solutions of \eqref{eq221},
and shows that these can be computed.

\begin{lemma}
 Let $u_k \in U_{ad}$ be given. Then the minimization problem
 \be\label{eq222}
  \min_{u\in U_{ad}} f(u_k) + \nabla f(u_k)(u-u_k) + \frac L2 \|u-u_k\|_{L^2(\Omega)}^2 +  g(u).
 \ee
 is solvable. A function $u_{k+1}\in U_{ad}$ is a global solution of this problem if and only if
 \be\label{eq223}
 u_{k+1}(x) \in
 H_{ \frac\beta{L+\alpha},b}\left(\frac1{L+\alpha}\left(L u_k(x)-  \nabla f(u_k)(x)\right)\right)
  \quad \text{ for a.a.\ } x\in \Omega.
\ee
 \end{lemma}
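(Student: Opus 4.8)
The plan is to recast the minimization \eqref{eq222} as a scalar, pointwise problem and then invoke \cref{cor25}. Under \cref{ass1} the gradient $\nabla f(u_k)$ is an element of $L^2(\Omega)$, so the linear term equals $\int_\Omega \nabla f(u_k)(x)\,(u(x)-u_k(x))\dx$, and every remaining term is likewise an integral over $\Omega$. Discarding the additive constants that do not depend on $u$, the objective in \eqref{eq222} becomes $\int_\Omega \phi_x(u(x))\dx$, where
\[
 \phi_x(v):=\nabla f(u_k)(x)\,v+\tfrac L2\,(v-u_k(x))^2+\tfrac\alpha2\,v^2+\beta\,|v|_0.
\]
For each fixed $x$ the scalar problem $\min_{|v|\le b}\phi_x(v)$ is exactly the prototype \eqref{eq205} with data $g_k=\nabla f(u_k)(x)$ and center $u_k(x)$, whose global solutions are described in \cref{cor25} by the condition $v\in H_{\frac\beta{L+\alpha},b}(\frac1{L+\alpha}(Lu_k(x)-\nabla f(u_k)(x)))$. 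Thus \eqref{eq223} is precisely the statement that $u(x)$ solves the pointwise problem for almost all $x$, and the task reduces to showing that the integral functional is minimized exactly by the pointwise minimizers.

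For the lower bound and solvability, set $m(x):=\min_{|v|\le b}\phi_x(v)$; this minimum is attained since $\phi_x$ is lower semicontinuous and either $b<\infty$ or the quadratic part is coercive. Clearly $\phi_x(u(x))\ge m(x)$ for every feasible $u$ and almost all $x$, hence $\int_\Omega\phi_x(u(x))\dx\ge\int_\Omega m\dx$. To see that this lower bound is attained — which simultaneously yields existence and identifies the minimal value — I would construct a measurable selection $x\mapsto u_{k+1}(x)\in H_{\frac\beta{L+\alpha},b}(\tfrac1{L+\alpha}(Lu_k(x)-\nabla f(u_k)(x)))$. The argument $x\mapsto\frac1{L+\alpha}(Lu_k(x)-\nabla f(u_k)(x))$ is measurable, and the set-valued map $H_{\frac\beta{L+\alpha},b}$ has nonempty values with closed graph by \cref{cor35}, so a standard measurable selection theorem produces a measurable $u_{k+1}$ with $\phi_x(u_{k+1}(x))=m(x)$ almost everywhere. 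Feasibility and square-integrability follow from the case analysis of \cref{lem23}: either $b<\infty$ and $|u_{k+1}(x)|\le b$, or $b=+\infty$, items \ref{lem23_1}--\ref{lem23_2} are void, and $|u_{k+1}(x)|\le\frac1{L+\alpha}|Lu_k(x)-\nabla f(u_k)(x)|\in L^2(\Omega)$.

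Combining the two inequalities, every feasible $u$ satisfies $\int_\Omega\phi_x(u(x))\dx\ge\int_\Omega m\dx$ with the right-hand side equal to the minimum. Because $\phi_x(u(x))\ge m(x)$ holds pointwise, equality of the two integrals forces $\phi_x(u(x))=m(x)$ for almost all $x$; that is, $u(x)$ solves the scalar problem for almost all $x$, which by \cref{cor25} is exactly \eqref{eq223}. Conversely, any $u$ satisfying \eqref{eq223} pointwise attains the minimal integral and is therefore a global solution. This gives the claimed equivalence.

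The genuinely delicate point — and the only place where the non-atomic structure of $(\Omega,\mathcal A,\mu)$ really intervenes — is the measurable selection together with the measurability and integrability of the value function $m$. I would address this through the theory of normal integrands: one verifies that $\phi:\Omega\times\R\to\R$ is a normal integrand, i.e.\ jointly measurable with $v\mapsto\phi_x(v)$ lower semicontinuous (the quadratic part is continuous and $v\mapsto|v|_0$ is lower semicontinuous), using the measurability of $u_k$ and $\nabla f(u_k)$. This guarantees that $m$ is measurable, that $\int_\Omega m\dx$ is finite (it is sandwiched between integrable functions, since $m(x)\le\phi_x(0)=\tfrac L2 u_k(x)^2$ and $m$ is bounded below by completing the square), and that a measurable minimizing selection exists. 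Everything else reduces to the scalar facts already recorded in \cref{lem23} and \cref{cor25}.
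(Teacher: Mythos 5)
Your proof is correct and follows essentially the same route as the paper's: rewrite \eqref{eq222} as an integral functional, reduce to the pointwise scalar problem and invoke \cref{cor25}, obtain existence via a measurable selection from the thresholding map, and deduce necessity from the fact that a nonnegative integrand with vanishing integral is zero almost everywhere (the paper phrases this as a modification-on-a-set-of-positive-measure argument, which is the same fact). The only real difference is in the selection step: where you appeal to normal-integrand/measurable-selection machinery, the paper constructs the selection explicitly by setting $u_{k+1}(x)=0$ wherever $H_{\frac\beta{L+\alpha},b}$ is multivalued — which works because $0$ belongs to the solution set at every such point — so your argument is a slightly heavier but equally valid way to settle the measurability issues that the paper treats by hand.
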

\begin{proof}
 The minimization of \eqref{eq222} is equivalent to the minimization
 \[
  \int_\Omega \nabla f(u_k)(x) u(x) + \frac L2 (u(x)-u_k(x))^2 +\frac\alpha2 |u(x)|^2+ \beta |u(x)|_0\dx.
 \]
Then \cref{cor25} shows that \eqref{eq223} is sufficient.
In addition, with the choice $u_{k+1}(x)=0$ in the case that
$H_{ \frac\beta{L+\alpha},b}\left(\frac1{L+\alpha}\left(L u_k(x)-  \nabla f(u_k)(x)\right)\right)$
is multi-valued, the resulting function $u_{k+1}$ is measurable.
Hence, the problem \eqref{eq222} is solvable.
Let now $u_{k+1}$ be a solution of \eqref{eq222}.
A standard argument shows that \eqref{eq223} is also necessary:
If \eqref{eq223} is violated on a set $A$ of positive measure, then we can modify $u_{k+1}$
on $A$ to get a decrease in the functional \eqref{eq222}, which contradicts the optimality of
$u_{k+1}$.
\end{proof}
This result shows that \cref{A1} is well-defined.
Let us introduce the following notation. We define
 \be\label{eq225}
  I(u):= \{x: \ u(x)\ne 0\}
 \ee
and
 \be\label{eq226}
  \chi_k:= \chi(u_k) = \chi_{I(u_k)}.
 \ee
Then it holds $\|u\|_0=\|\chi_{I(u)}\|_{L^1(\Omega)}$.
We also have the following necessary optimality condition for \eqref{eq222}.

\begin{lemma}\label{lem29}
 Let $u_{k+1}$ solve \eqref{eq222}. Then it holds
 \[
  ( \nabla f(u_k) + L(u_{k+1}-u_k) + \alpha u_{k+1}) (u-u_{k+1})\chi_{k+1} \ge0\quad \forall u\in U_{ad}.
 \]
\end{lemma}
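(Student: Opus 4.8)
The plan is to reduce the claimed inequality to the scalar variational inequality \eqref{eq206} established in \cref{cor25}, applied pointwise and then integrated over the support of $u_{k+1}$. First I would recall that, by the characterization \eqref{eq223} together with the integral form of \eqref{eq222} written out in the proof of the preceding lemma, for almost every $x\in\Omega$ the value $u_{k+1}(x)$ is a global solution of the scalar problem \eqref{eq205} with the identifications $g_k=\nabla f(u_k)(x)$ and scalar center $u_k(x)$, while the parameter playing the role of $s$ is $\beta$. This is the precise link that lets me use the scalar theory pointwise.

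Next I would split $\Omega$ according to whether $u_{k+1}(x)$ vanishes, using the set $I(u_{k+1})$ from \eqref{eq225} and recalling $\chi_{k+1}=\chi_{I(u_{k+1})}$. On the support $I(u_{k+1})=\{x:\ u_{k+1}(x)\ne0\}$ we have $\chi_{k+1}(x)=1$, and since $u_{k+1}(x)\ne0$ is a \emph{nonzero} global solution of \eqref{eq205}, the last assertion of \cref{cor25}, namely \eqref{eq206}, applies. Taking there $v=u(x)$, which is admissible because $u\in U_{ad}$ gives $|u(x)|\le b$ for a.a.\ $x$, yields for almost all $x\in I(u_{k+1})$ the inequality
\[
 \bigl(\nabla f(u_k)(x) + L(u_{k+1}(x)-u_k(x)) + \alpha u_{k+1}(x)\bigr)\,(u(x)-u_{k+1}(x)) \ge 0.
\]

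On the complementary set, where $u_{k+1}(x)=0$, we have $\chi_{k+1}(x)=0$, so the corresponding integrand vanishes identically there. Multiplying the displayed pointwise inequality by $\chi_{k+1}(x)\ge0$ therefore produces an integrand that is nonnegative for almost all $x\in\Omega$; integrating over $\Omega$ delivers the assertion of the lemma. The only step requiring care—and hence the main (mild) obstacle—is the justification that the pointwise application of \eqref{eq206} is legitimate almost everywhere: one must confirm that on $I(u_{k+1})$ the value $u_{k+1}(x)$ is genuinely a nonzero global minimizer of the scalar functional, which is exactly what \eqref{eq223} and the construction of $H_{s,b}$ guarantee, since the nonzero branches of $H_{s,b}$ correspond precisely to nonzero global solutions of \eqref{eq205}. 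Everything else—measurability of the factors, admissibility of $v=u(x)$, and integrating a nonnegative integrand—is routine.
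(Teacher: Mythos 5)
Your argument is correct and is essentially the paper's own proof, which simply cites \cref{cor25} and the variational inequality \eqref{eq206}: the pointwise application of \eqref{eq206} with $v=u(x)$ on the support $I(u_{k+1})$, with the zero set handled by the factor $\chi_{k+1}$, is exactly the intended reduction, and the link to the scalar problem \eqref{eq205} via \eqref{eq223} is the right justification. Note only that the lemma's conclusion is a pointwise a.e.\ inequality, so your final integration over $\Omega$ is superfluous---the a.e.\ nonnegativity of the product that you establish is already the assertion.
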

\begin{proof}
 This is a direct consequence of \cref{cor25} and variational inequality \eqref{eq206}.
\end{proof}
We will show that the characteristic functions $\chi_k$ converge in $L^1(\Omega)$.
The key observation is the following result, which is inspired by \cite[Lemma 3.3]{Lu2014}.
We extend it from $\R^n$ to $L^p(\Omega)$.
\begin{lemma}\label{lem27}
 Let $u_k,u_{k+1}\in U_{ad}$, $k\ge1$, be two consecutive iterates of \cref{A1}.
 Then it holds
 \[
  \|u_{k+1}-u_k\|_{L^p(\Omega)}^p \ge \sigma^p\|\chi_k-\chi_{k+1}\|_{L^1(\Omega)}
 \]
 for all $p\in [1,\infty)$, where $\sigma$ is given by \cref{cor25}.
\end{lemma}
\begin{proof}
 On $I(u_k)\setminus I(u_{k+1})$ it holds $u_k(x)\ne0$, $u_{k+1}(x)=0$.
 Hence $|u_{k+1}(x)-u_k(x)|\ge \sigma$ on $I(u_k)\setminus I(u_{k+1})$ by \cref{cor25}.
 This implies
 \[\begin{split}
  \int_\Omega |u_{k+1}-u_k|^p &\dx \ge \sigma^p \meas\Big((I(u_k)\setminus I(u_{k+1})) \cup (I(u_{k+1})\setminus I(u_k))\Big)\\
  &= \sigma^p  \|\chi_k-\chi_{k+1}\|_{L^1(\Omega)},
  \end{split}
 \]
which is the claim.
 \end{proof}

\begin{theorem}\label{lem28}
Suppose $L>L_f$.
 Let $(u_k)$ be a sequence of iterates generated by \cref{A1}.
 Then it holds:
 \begin{enumerate}
  \item The sequences $(u_k)$ and $(\nabla f(u_k))$ are bounded in $L^2(\Omega)$ if $\alpha>0$ or $b<+\infty$.
  \item The sequence $(f(u_k) + g(u_k))$ is monotonically decreasing and converging.
  \item $\|u_{k+1}-u_k\|_{L^2(\Omega)} \to 0$.
  \item $\sum_{k=1}^\infty\|\chi_k-\chi_{k+1}\|_{L^1(\Omega)}< + \infty$.
  \item $\chi_k \to \chi$ in $L^1(\Omega)$ for some characteristic function $\chi$.
 \end{enumerate}

\end{theorem}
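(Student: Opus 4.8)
The engine behind all five claims is a single sufficient-decrease inequality, after which everything follows by summation and elementary measure theory. The plan is to derive it first. Because $u_{k+1}$ is a \emph{global} minimizer over $U_{ad}$ of
\[
 u \mapsto f(u_k) + \nabla f(u_k)(u-u_k) + \frac L2\|u-u_k\|_{L^2(\Omega)}^2 + g(u),
\]
and $u_k$ itself is feasible, comparing the objective at $u_{k+1}$ with its value at $u_k$ gives
\[
 \nabla f(u_k)(u_{k+1}-u_k) + \frac L2\|u_{k+1}-u_k\|_{L^2(\Omega)}^2 + g(u_{k+1}) \le g(u_k).
\]
On the other hand, the Lipschitz continuity of $\nabla f$ (\cref{ass1}) yields the standard quadratic upper bound
\[
 f(u_{k+1}) \le f(u_k) + \nabla f(u_k)(u_{k+1}-u_k) + \frac{L_f}2\|u_{k+1}-u_k\|_{L^2(\Omega)}^2,
\]
obtained by integrating $\nabla f$ along the segment from $u_k$ to $u_{k+1}$. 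Adding the two displays and cancelling the common gradient term produces the decrease estimate
\[
 f(u_{k+1}) + g(u_{k+1}) \le f(u_k) + g(u_k) - \frac{L-L_f}2\|u_{k+1}-u_k\|_{L^2(\Omega)}^2,
\]
whose coefficient $\tfrac{L-L_f}2$ is strictly positive thanks to $L > L_f$.

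From this estimate the claims fall out in order. Claim~(2) is immediate: the values $f(u_k)+g(u_k)$ decrease monotonically, and since $f$ is bounded from below and $g\ge0$, the sequence is bounded from below and therefore converges. For claim~(3) I would sum the decrease estimate over $k$; the right-hand side telescopes and is bounded by $(f(u_0)+g(u_0)) - \inf(f+g)$, so $\sum_k \|u_{k+1}-u_k\|_{L^2(\Omega)}^2 < \infty$ and in particular $\|u_{k+1}-u_k\|_{L^2(\Omega)}\to0$.

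Claim~(4) then follows by applying \cref{lem27} with $p=2$, which for $k\ge1$ gives $\|\chi_k-\chi_{k+1}\|_{L^1(\Omega)} \le \sigma^{-2}\|u_{k+1}-u_k\|_{L^2(\Omega)}^2$ with $\sigma>0$ (since $\beta>0$ and $b>0$); summing over $k$ and using the summability $\sum_k\|u_{k+1}-u_k\|_{L^2(\Omega)}^2<\infty$ established above gives the result. For claim~(5), the finiteness of $\sum_k\|\chi_k-\chi_{k+1}\|_{L^1(\Omega)}$ makes $(\chi_k)$ a Cauchy sequence in $L^1(\Omega)$, hence convergent to some $\chi\in L^1(\Omega)$; passing to a subsequence converging pointwise almost everywhere and using that each $\chi_k$ takes values in $\{0,1\}$ shows the limit does too, so $\chi$ is the characteristic function of $I:=\{x:\chi(x)=1\}$.

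It remains to treat the boundedness claim~(1). If $b<+\infty$, then $U_{ad}$ is bounded in $L^2(\Omega)$ because $\meas(\Omega)<\infty$, so $(u_k)$ is bounded directly; if instead $\alpha>0$, the bound $g(u_k)\ge\frac\alpha2\|u_k\|_{L^2(\Omega)}^2$ together with claim~(2) bounds $\|u_k\|_{L^2(\Omega)}$. In both cases boundedness of $(\nabla f(u_k))$ follows from $\|\nabla f(u_k)\|_{L^2(\Omega)} \le \|\nabla f(0)\|_{L^2(\Omega)} + L_f\|u_k\|_{L^2(\Omega)}$. I do not anticipate a real obstacle: the whole argument is driven by the decrease estimate, and the only step requiring genuine care is verifying that the $L^1$-limit $\chi$ is again a characteristic function, which is precisely where the combinatorial structure of the $L^0$-term enters rather than any weak lower semicontinuity of $g$.
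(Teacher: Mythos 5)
Your proposal is correct and follows essentially the same route as the paper: the same sufficient-decrease estimate (derived from global minimality of $u_{k+1}$ combined with the Lipschitz descent bound for $f$), followed by telescoping to get summability of $\|u_{k+1}-u_k\|_{L^2(\Omega)}^2$, the application of \cref{lem27} with $p=2$ to obtain summability of $\|\chi_k-\chi_{k+1}\|_{L^1(\Omega)}$, and the Cauchy argument in $L^1(\Omega)$ for the limit $\chi$. The only cosmetic differences are that you split the decrease estimate into two displayed inequalities before adding them, and you spell out the $b<+\infty$ case of claim~(1) and the verification that the $L^1$-limit is a characteristic function, both of which the paper leaves implicit.
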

\begin{proof}
 We follow the proof of \cite[Theorem 3.4]{Lu2014}.
Due to the Lipschitz continuity of $\nabla f$, we obtain
\[
 f(u_{k+1}) \le f(u_k) + \nabla f(u_k)(u_{k+1}-u_k) + \frac{L_f}2\|u_{k+1}-u_k\|_{L^2(\Omega)}^2.
\]
Since $u_{k+1}$ solves \eqref{eq221}, we have
\begin{multline}\label{eq227}
 f(u_{k+1}) + g(u_{k+1}) \\
 \begin{aligned}
  &\le f(u_k) + \nabla f(u_k)(u_{k+1}-u_k) + \frac{L_f}2\|u_{k+1}-u_k\|_{L^2(\Omega)}^2 + g(u_{k+1})\\
 & = f(u_k) + \nabla f(u_k)(u_{k+1}-u_k) + \frac L2\|u_{k+1}-u_k\|_{L^2(\Omega)}^2 + g(u_{k+1}) \\
 & \qquad - \frac{L-L_f}2\|u_{k+1}-u_k\|_{L^2(\Omega)}^2\\
 & \le f(u_k) +g(u_k) - \frac{L-L_f}2\|u_{k+1}-u_k\|_{L^2(\Omega)}^2 .
 \end{aligned}
\end{multline}
This implies that the sequence $(f(u_k)+ g(u_k) )$ is monotonically decreasing.
Since $f$ and $g$ are bounded from below, it is convergent.
In the case $\alpha>0$, the boundedness of $(u_k)$ follows from the weak coercivity of $g$ in $L^2(\Omega)$.
The Lipschitz continuity of $\nabla f$ then implies the boundedness of $(\nabla f(u_k))$
by $\|\nabla f(u_k)\|_{L^2(\Omega)} \le \|\nabla f(0)\|_{L^2(\Omega)}+L_f\|u_k\|_{L^2(\Omega)}$.

Summing \eqref{eq227} over $k=1\dots n$ yields
\[
 f(u_{n+1}) + g(u_{n+1}) + \frac{L-L_f}2\sum_{k=1}^n \|u_{k+1}-u_k\|_{L^2(\Omega)}^2 \le f(u_0) + g(u_0).
\]
We get by passing to the limit $n\to\infty$
\[
 \lim_{n\to\infty}(f(u_n) + g(u_n) )  + \frac{L-L_f}2\sum_{k=1}^\infty \|u_{k+1}-u_k\|_{L^2(\Omega)}^2
 \le f(u_0) + g(u_0).
\]
Hence, it holds $\|u_{k+1}-u_k\|_{L^2(\Omega)} \to 0$.
Due to \cref{lem27}, it follows
\[
  \frac{L-L_f}2\sigma^2\sum_{k=1}^\infty \|\chi_k-\chi_{k+1}\|_{L^1(\Omega)} \le f(u_0) +g(u_0)
  -  \lim_{n\to\infty}(f(u_n) + g(u_n) ),
\]
which implies that $(\chi_k)$ is a Cauchy sequence in $L^1(\Omega)$.
It follows $\chi_k \to \chi$ in $L^1(\Omega)$, where $\chi$ is the characteristic function of a measurable subset of $\Omega$.
\end{proof}

\begin{remark}
 The property $\chi_{k+1}-\chi_k\to0$ was used in \cite{Lu2014} to show convergence of $(u_k)$.
 In the case of $\R^n$, $\chi_k\in \{0,1\}^n$, it follows $I_k=I_{k+1}$ for all $k\ge k_0$.
 This means, the non-zero entries are identified after a finite number of iterations.
 Then the algorithm reduces to a gradient projection algorithm, whose convergence properties are well-known.
 We cannot apply this argumentation, as the underlying measure of our space $L^p(\Omega)$ is the
 Lebesgue measure, which is non-atomic.
\end{remark}

Although the mapping $u\mapsto \|u\|_0$ is not sequentially weakly lower semicontinuous from $L^2(\Omega)$
to $\R$, we can prove  that the objective functional is weakly lower semicontinuous along the iterates $(u_k)$.
That is, for each weak limit point $u^*$ the value of the objective is lower than the limit $f(u_k)+ \|u_k\|_0$.

\begin{theorem}\label{thm313}
Let $u^*\in U_{ad}$ be a weak sequential limit point of the iterates $(u_k)$ of \cref{A1}
in $L^2(\Omega)$.
Then it holds
\[
 f(u^*)+g(u^*) \le \lim_{k\to\infty} (f(u_k) +g(u_k))
\]
and
\[
(1-\chi)u^*=0
\]
with $\chi$ as in \cref{lem28}.
\end{theorem}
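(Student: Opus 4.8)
The plan is to decompose the objective into its three constituents $f$, $\tfrac\alpha2\|\cdot\|_{L^2(\Omega)}^2$, and $\beta\|\cdot\|_0$, and to treat them separately: the first two by weak lower semicontinuity, and the third — the genuinely delicate term, since $\|\cdot\|_0$ is not weakly lower semicontinuous — by exploiting the strong $L^1$-convergence $\chi_k\to\chi$ furnished by \cref{lem28}. Throughout I fix a subsequence, relabeled $(u_k)$, with $u_k\rightharpoonup u^*$ in $L^2(\Omega)$; along it the conclusions of \cref{lem28} still hold, and both $(\|u_k\|_0)$ and $(f(u_k)+g(u_k))$ retain their full-sequence limits.

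First I would establish the support identity $(1-\chi)u^*=0$, on which the semicontinuity estimate hinges. By construction each iterate vanishes off its own support, i.e. $(1-\chi_k)u_k=0$ a.e. Since the $\chi_k$ are uniformly bounded by $1$ and converge to $\chi$ in $L^1(\Omega)$, they converge in every $L^p(\Omega)$, $p<\infty$; in particular, for each fixed $\phi\in L^\infty(\Omega)$ the products $(1-\chi_k)\phi$ converge strongly in $L^2(\Omega)$ to $(1-\chi)\phi$. Pairing this strong convergence against the weak convergence $u_k\rightharpoonup u^*$ in the identity $\int_\Omega u_k\,(1-\chi_k)\phi\dx=0$, and passing to the limit, yields $\int_\Omega u^*(1-\chi)\phi\dx=0$ for all $\phi\in L^\infty(\Omega)$, whence $(1-\chi)u^*=0$.

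The payoff is the estimate that replaces the missing weak lower semicontinuity of the $L^0$-term. From $(1-\chi)u^*=0$ it follows that $\{x:u^*(x)\ne0\}\subseteq\{x:\chi(x)=1\}$ up to a null set, so
\[
 \|u^*\|_0=\meas\{u^*\ne0\}\le\meas\{\chi=1\}=\|\chi\|_{L^1(\Omega)}=\lim_{k\to\infty}\|\chi_k\|_{L^1(\Omega)}=\lim_{k\to\infty}\|u_k\|_0,
\]
the right-hand side being a genuine limit (not merely a $\liminf$) thanks to $\chi_k\to\chi$ in $L^1(\Omega)$. For the remaining terms I would invoke that $f$ is weakly lower semicontinuous by \cref{ass1} and that $u\mapsto\|u\|_{L^2(\Omega)}^2$ is weakly lower semicontinuous, giving $f(u^*)\le\liminf_{k}f(u_k)$ and $\tfrac\alpha2\|u^*\|_{L^2(\Omega)}^2\le\liminf_{k}\tfrac\alpha2\|u_k\|_{L^2(\Omega)}^2$.

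Finally I would assemble the three pieces. Adding the two $\liminf$ bounds to the exact limit for $\|u_k\|_0$, and using both $\liminf a_k+\liminf b_k\le\liminf(a_k+b_k)$ and the fact that $\liminf(X_k+c_k)=\liminf X_k+c$ whenever $c_k\to c$, I obtain
\[
 f(u^*)+g(u^*)\le\liminf_{k\to\infty}\bigl(f(u_k)+g(u_k)\bigr)=\lim_{k\to\infty}\bigl(f(u_k)+g(u_k)\bigr),
\]
the last equality holding because the full sequence $(f(u_k)+g(u_k))$ converges by \cref{lem28}. The main obstacle is exactly the $\|u^*\|_0$ contribution: being non-weakly-lower-semicontinuous, it cannot be estimated directly, and the entire argument turns on converting the strong $L^1$-convergence of the supports into the containment $\{u^*\ne0\}\subseteq\{\chi=1\}$. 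That this $L^0$-contribution survives as an honest limit rather than a $\liminf$ is precisely what makes the subadditivity-of-$\liminf$ step compatible with the weak lower semicontinuity of the other two terms.
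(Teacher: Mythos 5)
Your proof is correct and takes essentially the same route as the paper's: both arguments establish $(1-\chi)u^*=0$ by pairing the weak convergence $u_k\rightharpoonup u^*$ against the strong convergence $\chi_k\to\chi$ (upgraded from $L^1(\Omega)$ to $L^p(\Omega)$ using that the $\chi_k$ are characteristic functions), deduce $\|u^*\|_0\le\|\chi\|_{L^1(\Omega)}=\lim_{k\to\infty}\|u_k\|_0$, and then combine this exact limit for the $L^0$-term with weak lower semicontinuity of $f$ and of the quadratic term. The only cosmetic differences are that you test with $\phi\in L^\infty(\Omega)$ where the paper uses $\phi\in C_c^\infty(\Omega)$, and that you split the $\liminf$ over $f$ and $\tfrac\alpha2\|\cdot\|_{L^2(\Omega)}^2$ separately where the paper keeps them together.
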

\begin{proof}
 Let $(u_{k_n})$ be a subsequence such that $u_{k_n}\rightharpoonup u^*$ in $U_{ad}$.
 Due to the construction of $\chi_k$, we have $(1-\chi_k)u_k=0$ almost everywhere in $\Omega$.
 Let $\phi\in C_c^\infty(\Omega)$ be given. Then it holds
 \[
  \int_\Omega \phi (1-\chi_k)u_k\dx=0.
 \]
 Due to \cref{lem28} we have $\chi_k\to \chi$ in $L^1(\Omega)$. Since $(\chi_k)$ are characteristic functions,
 it follows $\chi_k\to \chi$  in $L^2(\Omega)$. This allows to pass to the limit in the integral to obtain
 \[
  \int_\Omega \phi (1-\chi)u^*\dx=0.
 \]
 As $\phi \in C_c^\infty(\Omega)$ is arbitrary, it follows $(1-\chi)u^*=0$  almost everywhere in $\Omega$.
 This implies $\chi(u^*) \le \chi$ and $\|u^*\|_0 = \|\chi(u^*)\|_{L^1(\Omega)} \le \|\chi\|_{L^1(\Omega)}$.
In addition, we get
\begin{multline*}
  \lim_{k\to\infty} (f(u_k) +\frac\alpha2\|u_k\|_{L^2(\Omega)}^2+\beta \|u_k\|_0)\\
\begin{aligned}
  &= \liminf\limits_{k_n\to\infty} \left( f(u_{k_n})+\frac\alpha2\|u_k\|_{L^2(\Omega)}^2 \right) + \beta (\lim_{k_n\to\infty}\|\chi_{k_n}\|_{L^1(\Omega)})\\
&  \ge f(u^*) +\frac\alpha2\|u^*\|_{L^2(\Omega)}^2+ \beta\|\chi\|_{L^1(\Omega)}\\
 & \ge f(u^*) +\frac\alpha2\|u^*\|_{L^2(\Omega)}^2+\beta\|u^*\|_0,
\end{aligned}
\end{multline*}
which proves the claim.
\end{proof}

In the next result of this section, we prove that we can pass to the limit in the necessary optimality
condition of \cref{lem29}.
To this end, we need to assume additional properties of $f$.

\begin{lemma}\label{lem314}
Let $p>2$.
Let us assume complete continuity of $\nabla f$ from $L^2(\Omega)$ to $L^p(\Omega)$.
In addition, assume that $(u_k)$ is bounded in $L^p(\Omega)$.
Let $u^*\in U_{ad}$ be a weak sequential limit point in $L^2(\Omega)$ of the iterates $(u_k)$ of \cref{A1}.
Then it holds
\[
 (\nabla f(u^*) + \alpha u^*) (u-u^*)\chi\ge0 \quad \forall u\in U_{ad} \cap L^p(\Omega).
\]
where $\chi$ is as in \cref{lem28}.
\end{lemma}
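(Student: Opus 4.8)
I want to pass to the limit in the variational inequality of Lemma 3.9 (labeled `lem29`), which reads
\[
 ( \nabla f(u_k) + L(u_{k+1}-u_k) + \alpha u_{k+1}) (u-u_{k+1})\chi_{k+1} \ge0 \quad \forall u\in U_{ad},
\]
and produce the limiting inequality
\[
 (\nabla f(u^*) + \alpha u^*) (u-u^*)\chi\ge0 \quad \forall u\in U_{ad} \cap L^p(\Omega).
\]
Let me think about what convergences I have and what I need.

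Let me lay out the plan carefully.

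First, fix a subsequence $u_{k_n}\rightharpoonup u^*$ in $L^2(\Omega)$; since $u_{k_n}$ is bounded in $L^p(\Omega)$ with $p>2$, after passing to a further subsequence I may assume $u_{k_n}\rightharpoonup u^*$ also in $L^p(\Omega)$, so $u^*\in L^p(\Omega)$. By Theorem 3.11 (`lem28`) I have $\chi_k\to\chi$ in $L^1(\Omega)$ and hence in $L^r(\Omega)$ for every $r<\infty$ (the $\chi_k$ are uniformly bounded characteristic functions), $\|u_{k+1}-u_k\|_{L^2(\Omega)}\to0$, and the objective values converge. The complete continuity assumption gives $\nabla f(u_{k_n})\to\nabla f(u^*)$ strongly in $L^p(\Omega)$.

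Now I want to test the inequality of Lemma 3.9 against a fixed admissible $u\in U_{ad}\cap L^p(\Omega)$, integrate in $n$ along the subsequence, and examine each term of the product
\[
 \int_\Omega \big( \nabla f(u_{k_n}) + L(u_{k_n+1}-u_{k_n}) + \alpha u_{k_n+1}\big)(u-u_{k_n+1})\,\chi_{k_n+1}\dx \ge 0.
\]
The key step is to replace the index $k_n+1$ by $k_n$ wherever it helps, using $\|u_{k_n+1}-u_{k_n}\|_{L^2}\to0$ and $\|\chi_{k_n+1}-\chi_{k_n}\|_{L^1}\to0$, so that every weak limit can be read off from the subsequence $u_{k_n}\rightharpoonup u^*$. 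I expect the $L(u_{k_n+1}-u_{k_n})$ factor to vanish in the limit since it tends to $0$ strongly in $L^2$ while the remaining factor $(u-u_{k_n+1})\chi_{k_n+1}$ stays bounded in $L^2$. For the $\nabla f$ term I pair a strongly convergent factor $\nabla f(u_{k_n})\to\nabla f(u^*)$ in $L^p$ against $(u-u_{k_n+1})\chi_{k_n+1}$, which converges weakly in $L^{p'}$ — here is exactly why $p>2$ is needed, so that $\nabla f(u^*)\in L^p$ pairs with the $L^{p'}$-bounded test factor and a strong-times-weak product passes to the limit. The term $\alpha u_{k_n+1}\chi_{k_n+1}(u-u_{k_n+1})$ is quadratic in the iterates and is the delicate one.

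The main obstacle is the quadratic term $-\alpha\int u_{k_n+1}^2\chi_{k_n+1}$ arising from the $\alpha u_{k+1}\cdot(u-u_{k+1})\chi_{k+1}$ product, because a weak limit does not commute with a square. My plan is to exploit the identity $(1-\chi_{k+1})u_{k+1}=0$, equivalently $u_{k+1}\chi_{k+1}=u_{k+1}$ a.e., which collapses the troublesome factors: wherever $u_{k+1}\ne0$ we have $\chi_{k+1}=1$, so $\alpha u_{k+1}\chi_{k+1}(u-u_{k+1}) = \alpha u_{k+1}(u\chi_{k+1}-u_{k+1})$, and the genuinely quadratic piece is simply $-\alpha\int u_{k_n+1}^2\dx$. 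For this I use weak lower semicontinuity of the $L^2$-norm, $\liminf -\alpha\|u_{k_n+1}\|_{L^2}^2 \le -\alpha\|u^*\|_{L^2}^2$ (with the sign making the inequality go the correct way, since it appears with a negative coefficient), together with $(1-\chi)u^*=0$ from Theorem 3.13 (`thm313`) to recognize the limit as $-\alpha\int (u^*)^2\dx = -\alpha\int (u^*)^2\chi\dx = -\alpha\int u^*(u^*)\chi\dx$. Collecting the limits of all four pieces and using $u^*\chi=u^*$ yields
\[
 \int_\Omega (\nabla f(u^*)+\alpha u^*)(u-u^*)\chi\dx \ge 0,
\]
and since $u\in U_{ad}\cap L^p(\Omega)$ was arbitrary this is the claim. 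The one point demanding care is confirming that taking $\liminf$ through the $-\alpha\|u_{k_n+1}\|^2$ term respects the inequality direction and that the mixed term $\alpha\int u_{k_n+1}\,u\,\chi_{k_n+1}$ converges (weak $u_{k_n+1}\rightharpoonup u^*$ in $L^2$ against the strongly convergent $u\chi_{k_n+1}\to u\chi$ in $L^2$, valid since $u\in L^p\subset L^2$), so that the only non-commuting term is the single square handled by weak lower semicontinuity.
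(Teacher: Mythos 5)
Your proposal follows essentially the same route as the paper's proof: start from the variational inequality of \cref{lem29}; kill the $L(u_{k+1}-u_k)$ contributions using $\|u_{k+1}-u_k\|_{L^2(\Omega)}\to 0$ against $L^2$-bounded factors; pass to the limit in the $\nabla f$ term by pairing the strong convergence $\nabla f(u_{k_n})\to\nabla f(u^*)$ in $L^p(\Omega)$ (complete continuity) with the weakly convergent remaining factor, H\"older with $p>2$ absorbing the $\chi$-factors; use $\chi_{k+1}u_{k+1}=u_{k+1}$ to reduce the quadratic term to $-\alpha\int_\Omega u_{k_n+1}^2\dx$, estimated by weak lower semicontinuity of the $L^2$-norm (note it is an upper bound on the $\limsup$ that you need, not the $\liminf$ you wrote, though your intent is clear); and identify the limit via $(1-\chi)u^*=0$ from \cref{thm313}. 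All of these limit passages are correct and coincide with the paper's argument, which differs only in bookkeeping (the paper shifts indices $k+1\to k$ before passing to the limit).

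The one genuine discrepancy is at the very end. The conclusion of \cref{lem314} is, like \cref{lem29}, a \emph{pointwise a.e.} inequality --- and it is used in this pointwise form in the proof of \cref{thm316} to derive the projection identity there --- whereas your argument, which integrates over all of $\Omega$ with no test function, only yields the integrated inequality $\int_\Omega(\nabla f(u^*)+\alpha u^*)(u-u^*)\chi\dx\ge 0$; the phrase ``since $u$ was arbitrary'' does not by itself bridge the two. The paper avoids this by multiplying the inequality of \cref{lem29} with an arbitrary $\phi\in C_c^\infty(\Omega)$, $\phi\ge0$, before integrating, so that the limit inequality holds against every such $\phi$ and is therefore pointwise. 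Your version can be repaired without any new idea: given $v\in U_{ad}\cap L^p(\Omega)$ and a measurable set $A\subset\Omega$, apply your integrated inequality to the competitor $u:=\chi_A v+(1-\chi_A)u^*$, which lies in $U_{ad}\cap L^p(\Omega)$ because $u^*\in U_{ad}\cap L^p(\Omega)$ (as you established at the outset); this localizes the inequality to $A$, and since $A$ is arbitrary the pointwise statement follows.
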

\begin{proof}
Let $u\in U_{ad} \cap L^p(\Omega)$ and
 $\phi\in C_c^\infty(\Omega)$ with $\phi\ge0$ be given. Then it holds
by \cref{lem29}
\be\label{eq231}
  \int_\Omega (\nabla f(u_k) + L(u_{k+1}-u_k) + \alpha u_{k+1}) (u-u_{k+1}) \chi_{k+1} \phi  \dx\ge 0.
\ee
This is equivalent to
\[
\begin{aligned}
0 &\le \int_\Omega (\nabla f(u_k) + (L+\alpha)(u_{k+1}-u_k) + \alpha u_k) (u-u_k + u_k -u_{k+1}) \chi_{k+1} \phi  \dx\\
&= \int_\Omega (\nabla f(u_k) + \alpha u_k) (u-u_k) \chi_{k+1} \phi  \dx\\
&\qquad
+ \int_\Omega (L+\alpha)(u_{k+1}-u_k)  (u-u_k) \chi_{k+1} \phi  \dx\\
&\qquad
+  \int_\Omega (\nabla f(u_k) + (L+\alpha)(u_{k+1}-u_k) + \alpha u_k) ( u_k -u_{k+1}) \chi_{k+1} \phi  \dx
.
\end{aligned}
\]
Due to \cref{lem28}, we have $u_{k+1}-u_k\to 0$ in $L^2(\Omega)$.
Since $(u_k)$ and $(\nabla f(u_k))$ are bounded in $L^2(\Omega)$ and $(\chi_k)$ is bounded in $L^\infty(\Omega)$,
the second and third integral in this expression tend to zero for $k\to \infty$.
It remains to study the convergence of
\[
\int_\Omega (\nabla f(u_k) + \alpha u_k) (u-u_k) \chi_{k+1} \phi  \dx.
\]
Let $(u_{k_n})$ be a subsequence such that $u_{k_n}\rightharpoonup u^*$ in $U_{ad}$.
Due to the assumptions on $f$, we obtain $\nabla f(u_{k_n})\to \nabla f(u^*)$ in $L^p(\Omega)$ for some $p>2$.
As argued in the proof of \cref{thm313}, it holds $\chi_k\to \chi$ in $L^q(\Omega)$ for $1/p+1/2+1/q=0$.
Then we can pass to the limit along the subsequence to obtain
\[
\int_\Omega \nabla f(u_{k_n})  (u-u_{k_n}) \chi_{k_n+1} \phi  \dx \to
\int_\Omega \nabla f(u^*)  (u-u^*) \chi \phi  \dx.
\]
The last step concerns the limit process of
\[
 \int_\Omega (-u_k^2)  \chi_{k+1} \phi  =  \int_\Omega (-u_k^2)(\chi_{k+1}-\chi_k) \phi + \int_\Omega  (-u_k^2)\phi,
\]
where we have used $\chi_k u_k^2 = u_k^2$.
By assumption, the sequence $(u_k)$ is bounded in $L^p(\Omega)$ for some $p>2$.
The first integral tends to zero for $k\to\infty$ due to $\chi_k\to \chi$ in $L^q(\Omega)$
with $2/p + 1/q=1$. Passing to the lim-sup in the second integral yields
\[
 \limsup_{n\to\infty} \int_\Omega  (-u_{k_n}^2)\phi \le \int_\Omega (-u^*)^2 \phi = \int_\Omega (-u^*)^2\chi \phi,
\]
where the last equality follows from $(1-\chi)u^*=0$ by \cref{thm313}. And the claim is proven.
\end{proof}

\begin{remark}
If $f$ is induced by an optimal control problem, then $\nabla f$ consists of the
superposition of solution operators of partial differential equations, which are smoothing
for elliptic and parabolic equations. Hence, the assumption on complete continuity of $\nabla f$ is
not a serious restriction.
In particular, the function $f$ as defined in \cref{ex1} satisfies the assumptions of the previous lemma,
due to the compact embedding of $H^1(\Omega)$ into $L^p(\Omega)$ for some $p>2$,
where $p$ depends on the spatial dimension.
\end{remark}

In the case $\alpha>0$, we can obtain strong convergence of subsequence of $(u_k)$.

\begin{theorem}\label{thm316}
Suppose $\alpha>0$.
Let us assume complete continuity of $\nabla f$ from $L^2(\Omega)$ to $L^2(\Omega)$.
Let $u^*\in U_{ad}$ be a weak sequential limit point in $L^2(\Omega)$ of the iterates $(u_k)$ of \cref{A1}.
Then $u^*$ is a strong sequential limit point of $(u_k)$ in $L^q(\Omega)$ for all $q<2$.
Moreover, $u^*$ is a fixed point of the hard thresholding iteration, i.e., it satisfies
\be\label{eq235}
 u^*(x) \in
 H_{ \frac\beta{L+\alpha},b}\left(\frac1{L+\alpha}\left(L u^*(x)-  \nabla f(u^*)(x)\right)\right)
  \quad \text{ for a.a.\ } x\in \Omega.
\ee
\end{theorem}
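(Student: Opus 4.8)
The plan is to combine the convergence properties from \cref{lem28} and \cref{thm313} with the pointwise projection structure of the iteration and the strict monotonicity coming from $\alpha>0$: the idea is to promote the given weak convergence to pointwise a.e.\ convergence along a subsequence, and only then upgrade it to strong $L^q$-convergence for $q<2$. Fix a subsequence with $u_{k_n}\rightharpoonup u^*$ in $L^2(\Omega)$. Since $\nabla f$ is completely continuous, $\nabla f(u_{k_n})\to \nabla f(u^*)=:z^*$ strongly in $L^2(\Omega)$. By \cref{lem28} we have $\|u_{k+1}-u_k\|_{L^2(\Omega)}\to0$ and $\chi_k\to\chi$ in $L^1(\Omega)$. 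Passing to a further (not relabeled) subsequence and refining to a common a.e.-convergent one, I arrange that, almost everywhere on $\Omega$, simultaneously $\nabla f(u_{k_n})\to z^*$, $u_{k_n+1}-u_{k_n}\to0$, and $\chi_{k_n}\to\chi$, $\chi_{k_n+1}\to\chi$ (so these characteristic functions are eventually equal to $\chi$).

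The core step is a pointwise fixed-point argument. On $I(u_{k+1})$ the inclusion \eqref{eq223} forces $u_{k+1}\neq0$, so by \cref{cor25} and the variational inequality \eqref{eq206} the iterate coincides there with the metric projection $P_{[-b,b]}$ onto $[-b,b]$:
\[
 u_{k+1}(x)=P_{[-b,b]}\!\left(\tfrac{1}{L+\alpha}\big(Lu_k(x)-\nabla f(u_k)(x)\big)\right).
\]
Fix $x$ in the a.e.\ set above with $\chi(x)=1$; then eventually $\chi_{k_n+1}(x)=1$, and writing $u_{k_n}(x)=u_{k_n+1}(x)-(u_{k_n+1}(x)-u_{k_n}(x))$ I obtain
\[
 u_{k_n}(x)=\Phi_x\big(u_{k_n}(x)\big)+\varepsilon_n(x),\qquad \Phi_x(a):=P_{[-b,b]}\!\left(\tfrac{L}{L+\alpha}a-\tfrac{1}{L+\alpha}z^*(x)\right),
\]
where $\varepsilon_n(x)\to0$ absorbs the increment $u_{k_n+1}-u_{k_n}$ and the error $\nabla f(u_{k_n})-z^*$. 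Because $P_{[-b,b]}$ is nonexpansive, $\Phi_x$ is a contraction with modulus $\tfrac{L}{L+\alpha}<1$ and hence has a unique fixed point $a^*(x)$; subtracting its fixed-point identity gives $\tfrac{\alpha}{L+\alpha}\,|u_{k_n}(x)-a^*(x)|\le|\varepsilon_n(x)|\to0$. This is precisely where $\alpha>0$ enters, and it yields $u_{k_n}(x)\to a^*(x)$. On $\{\chi=0\}$ one has $\chi_{k_n}(x)=0$ eventually, hence $u_{k_n}(x)=0\to0$. Thus $u_{k_n}\to u^{**}$ a.e.\ for some measurable $u^{**}$.

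It then remains to upgrade and identify the limit. As $(u_{k_n})$ is bounded in $L^2(\Omega)$ (by \cref{lem28}, using $\alpha>0$), for every $q<2$ the family $\{|u_{k_n}|^q\}$ is uniformly integrable, so Vitali's theorem turns the a.e.\ convergence into $u_{k_n}\to u^{**}$ in $L^q(\Omega)$ for all $q<2$; since $u_{k_n}\rightharpoonup u^*$ in $L^2(\Omega)$, the limits agree, $u^{**}=u^*$, which proves that $u^*$ is a strong $L^q$-limit point. For the fixed-point inclusion \eqref{eq235}, note that $u_{k_n+1}\in H_{\frac{\beta}{L+\alpha},b}(q_{k_n})$ pointwise by \eqref{eq223}, with $q_{k_n}=\tfrac{1}{L+\alpha}(Lu_{k_n}-\nabla f(u_{k_n}))\to\tfrac{1}{L+\alpha}(Lu^*-z^*)$ a.e.\ and $u_{k_n+1}\to u^*$ a.e.; since $H_{\frac{\beta}{L+\alpha},b}$ has closed graph (\cref{cor35}), passing to the a.e.\ limit yields \eqref{eq235}.

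The main obstacle is the core step. The support of the iterates is not eventually fixed, as it would be in $\R^n$, so a.e.\ convergence has to be extracted by hand, and the delicate point is that the merely weak convergence of $u_{k_n}$ blocks a naive passage to the limit in the projection formula. The contraction estimate circumvents this by showing that the pointwise residual of the fixed-point relation controls the distance to the limit, but this control is available \emph{only} because $\alpha>0$ makes the contraction modulus strictly less than one; for $\alpha=0$ the argument collapses, consistent with the hypothesis.
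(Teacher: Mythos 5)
Your proof is correct, and it reaches the conclusion by a genuinely different mechanism than the paper, even though both arguments start from the same variational inequality (\cref{lem29}, i.e.\ the pointwise inequality \eqref{eq206}) and share the same final closed-graph step. The paper rearranges that inequality on the support by dividing by $\alpha$, obtaining
$u_{k+1}=\chi_{k+1}\proj_{U_{ad}}\bigl(-\tfrac1\alpha(\nabla f(u_k)+L(u_{k+1}-u_k))\bigr)$;
since $\nabla f(u_{k_n})+L(u_{k_n+1}-u_{k_n})\to\nabla f(u^*)$ strongly in $L^2(\Omega)$ and $\chi_{k_n+1}\to\chi$ in every $L^p(\Omega)$, $p<\infty$, nonexpansiveness of the projection in $L^2(\Omega)$ together with H\"older's inequality gives strong $L^q$-convergence of $u_{k_n+1}$ in one stroke, after which the limit is identified with $u^*$ by weak convergence; the fixed-point inclusion \eqref{eq235} is then obtained exactly as you do it, via pointwise a.e.\ convergent subsequences, \eqref{eq223}, and the closed graph of $H$ from \cref{cor35}. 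You instead divide by $L+\alpha$, which turns the same variational inequality into the explicit pointwise formula $u_{k+1}(x)=P_{[-b,b]}\bigl(\tfrac1{L+\alpha}(Lu_k(x)-\nabla f(u_k)(x))\bigr)$ on the support, and you treat this as a perturbed fixed-point equation for the scalar contraction $\Phi_x$ with modulus $\tfrac L{L+\alpha}<1$; this yields a.e.\ convergence first, which you then upgrade to $L^q(\Omega)$-convergence, $q<2$, by uniform integrability and Vitali's theorem, using the $L^2$-bound from \cref{lem28}. What the paper's route buys is brevity and strong convergence of the entire weakly convergent subsequence, with no further subsequence extraction; what your route buys is a completely elementary, pointwise argument that makes transparent exactly where $\alpha>0$ enters (the contraction modulus $\tfrac L{L+\alpha}$, versus the paper's division by $\alpha$) and that produces directly the a.e.\ convergence which is needed anyway for the closed-graph step. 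Both arguments consume the same inputs --- $u_{k+1}-u_k\to0$ and $\chi_k\to\chi$ from \cref{lem28}, complete continuity of $\nabla f$, and \cref{cor35} --- so neither is more general than the other; your extraction of further (not relabeled) a.e.\ subsequences is harmless here, since the assertion only concerns limit points.
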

\begin{proof}
 The necessary optimality condition of \cref{lem29} implies
 \[
 u_{k+1}  = \chi_{k+1}u_{k+1}  =\chi_{k+1} \proj_{U_{ad}}\left( -\frac1\alpha (\nabla f(u_k) + L(u_{k+1}-u_k)\right).
 \]
Due to \cref{lem28}, we have $u_{k+1}-u_k\to 0$ in $L^2(\Omega)$.
Let $u_{k_n}\rightharpoonup u^*$. This implies $u_{k_n+1}\rightharpoonup u^*$, and by the assumptions on $\nabla f$,
$\nabla f(u_{k_n}) \to \nabla f(u^*)$ in $L^2(\Omega)$.
In addition, $\chi_{k+1} \to \chi$ in $L^p(\Omega)$ for all $p<\infty$.
This implies the strong convergence of $\nabla f(u_{k_n}) + L(u_{{k_n}+1}-u_{k_n}) \to \nabla f(u^*)$ in $L^2(\Omega)$,
which in turn gives
\[
 \chi_{k+1} \proj_{U_{ad}}\left( -\frac1\alpha ( \nabla f(u_{k_n}) + L(u_{{k_n}+1}-u_{k_n}) \right)
 \to \chi \proj_{U_{ad}}\left( -\frac1\alpha \nabla f(u^*)\right)
\]
in $L^q(\Omega)$ for all $q<2$. Hence it follows $u_{k_n}\to u^*$ in  $L^q(\Omega)$ for all $q<2$.
In addition, $u^*$ satisfies
\[
  u^* = \chi \proj_{U_{ad}}\left( -\frac1\alpha \nabla f(u^*))\right),
\]
which is equivalent to the result of \cref{lem314}. Passing to  pointwise a.e.\@ converging subsequences
yields the claimed fixed point property as a consequence of \eqref{eq223}
and the closedness of the graph of $H$, cf., \cref{cor35}.
\end{proof}

Unfortunately, the fixed point equation \eqref{eq235} for $u^*$ depends on $L$.
In addition, the fixed point equation does not imply the maximum principle \eqref{eqpmp}
but is strictly weaker for $L>0$.
In this sense, the fixed point equation can be interpreted as an first-order optimality condition,
such an optimality condition was called $L$-stationarity in \cite{BeckEldar2013}.

\begin{lemma}\label{lem317}
Suppose $b\in (0,+\infty]$, $L\ge0$, $\alpha\ge0$, $L+\alpha>0$.
Let  $u^*\in U_{ad}$ satisfy \eqref{eq235}. Then it holds
 \be\label{eq370}
  u^*(x) \in H^{\textrm{FP}}_{\alpha,\beta,L,b}\left( \nabla f(u^*)(x) \right).
 \ee
where $H^{\textrm{FP}}$ is given by the following conditions with $s:=\frac\beta{L+\alpha}$:
\begin{enumerate}
 \item If $\sqrt{2s}\le b$ then
 $u \in H^{\textrm{FP}}_{\alpha,\beta,L,b}( g)$ if one of the following conditions is satisfied
	\begin{enumerate}
	 \item $u=-b$ and $g\ge \alpha b$,
	 \item $u=b$ and  $g\le -\alpha b$,
	 \item $u=0$ and $|g|\le (L+\alpha)\sqrt{2s}$,
	 \item $\alpha>0$, $\alpha u = -g$, $|g|\in \alpha [\sqrt{2s},\,b]$.
	 \item $\alpha=0$, $g=0$, $u\in [\sqrt{2s},b]$.
	\end{enumerate}

 \item If $\sqrt{2s} > b$ then
 $u \in H^{\textrm{FP}}_{\alpha,\beta,L,b}( g)$ if one of the following conditions is satisfied
	\begin{enumerate}
	 \item $u=-b$ and $g\ge (L+\alpha)(b/2 + s/b)-Lb$,
	 \item $u=b$ and  $g\le -( (L+\alpha)(b/2 + s/b)-Lb)$,
	 \item $u=0$ and $|g|\le (L+\alpha)(b/2 + s/b)$,
	 \item $\alpha>0$, $\alpha u = -g$, $|g|\in \alpha [\sqrt{2s},\,b]$.
	 \item $\alpha=0$, $g=0$, $u\in [\sqrt{2s},b]$.
	 \end{enumerate}
\end{enumerate}
\end{lemma}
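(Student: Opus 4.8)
The plan is to prove \eqref{eq370} pointwise, reading \eqref{eq235} as a scalar relation for almost every $x$. Fix such an $x$, abbreviate $u := u^*(x)$, $g := \nabla f(u^*)(x)$, and $s := \beta/(L+\alpha)$; note that $L+\alpha>0$ guarantees $s$ is well defined. By the definition of $H_{s,b}$, the membership \eqref{eq235} is equivalent to saying that $u$, together with the \emph{minimization coefficient}
\[
  q_0 := \frac{1}{L+\alpha}\bigl(g - Lu\bigr),
\]
satisfies one of the five conditions of \cref{lem23} (recall that the argument of $H_{s,b}$ is the negative of that coefficient, so the coefficient equals $-\tfrac{1}{L+\alpha}(Lu-g)=q_0$). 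The entire proof is then the elimination of $q_0$ from those five conditions in favour of the pair $(u,g)$, which is pure bookkeeping once the value of $u$ in each branch is inserted.

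First I would resolve the term $\max(b,\,b/2+s/b)$ appearing in conditions \ref{lem23_1}--\ref{lem23_2}. Since $b>0$, the inequality $b/2+s/b\le b$ is equivalent to $2s\le b^2$, i.e.\ to $\sqrt{2s}\le b$; this is exactly the dichotomy of the statement. In the regime $\sqrt{2s}\le b$ one has $\max(b,b/2+s/b)=b$, the interval $[\sqrt{2s},b]$ in condition \ref{lem23_3} is nonempty, condition \ref{lem23_4} (which requires $b\le\sqrt{2s}$) degenerates to the boundary, and condition \ref{lem23_5} is active; these produce the list in item~1. In the regime $\sqrt{2s}>b$ one has $\max(b,b/2+s/b)=b/2+s/b$, the interval in \ref{lem23_3} is empty and \ref{lem23_5} is vacuous (it requires $b\ge\sqrt{2s}$), while \ref{lem23_4} becomes active; these give item~2, and the interior branches 2(d)--2(e) are then vacuously empty because $\alpha[\sqrt{2s},b]=\emptyset$.

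Next I would substitute the branch values of $u$ into $q_0$ and the accompanying inequalities. For $u=-b$ we have $q_0=(g+Lb)/(L+\alpha)$, so $q_0\ge b$ reduces (after multiplying by $L+\alpha$ and cancelling $Lb$) to $g\ge\alpha b$ in the first regime, while $q_0\ge b/2+s/b$ reduces to $g\ge(L+\alpha)(b/2+s/b)-Lb$ in the second; the branch $u=b$ is symmetric. For $u=0$ we have $q_0=g/(L+\alpha)$, so $|q_0|\le\sqrt{2s}$ and $|q_0|\le b/2+s/b$ become $|g|\le(L+\alpha)\sqrt{2s}$ and $|g|\le(L+\alpha)(b/2+s/b)$ respectively. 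The only nontrivial branch is the interior one, condition \ref{lem23_3}, where $u=-q_0$: this rearranges to $(L+\alpha)u=Lu-g$, i.e.\ $\alpha u=-g$. If $\alpha>0$ this forces $u=-g/\alpha$ and hence $|q_0|=|u|=|g|/\alpha$, so $\sqrt{2s}\le|q_0|\le b$ turns into $|g|\in\alpha[\sqrt{2s},b]$, matching 1(d); if $\alpha=0$ it forces $g=0$ and leaves $|q_0|=|u|$, so the same inequalities become $|u|\in[\sqrt{2s},b]$, which is item 1(e) (its negative-sign counterpart is produced by the evident symmetry $(u,g)\mapsto(-u,-g)$ of the whole construction).

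I expect the main obstacle to be purely organisational rather than analytical: keeping track of which of the five conditions survive in each regime, and treating the interior branch $u=-q_0$ separately for $\alpha>0$ and $\alpha=0$, since only in the former does $\alpha u=-g$ pin down $u$. Two consistency checks should be included: the degenerate case $\sqrt{2s}=b$, where $b/2+s/b=b$ so that the two regimes coincide (both threshold formulas agree and \ref{lem23_3}--\ref{lem23_5} collapse to $|q_0|\le b$), and the case $b=+\infty$, which falls under the first regime with the $u=\pm b$ branches void. Once these are dispatched, matching the translated inequalities against the enumerated conditions defining $H^{\textrm{FP}}_{\alpha,\beta,L,b}$ completes the proof.
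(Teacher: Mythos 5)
Your proposal is correct and follows essentially the same route as the paper's own proof: reduce \eqref{eq235} to a pointwise scalar fixed-point relation, substitute $q=\tfrac{1}{L+\alpha}(g-Lu)$ into the five conditions of \cref{lem23}, and translate each branch ($u=\pm b$, $u=0$, interior) into conditions on $(u,g)$, with the dichotomy $\sqrt{2s}\le b$ versus $\sqrt{2s}>b$ resolving the $\max(b,\,b/2+s/b)$ term. The only cosmetic difference is organisational (you split by regime first, the paper splits by the value of $u$ first), and your added consistency checks for $\sqrt{2s}=b$ and $b=+\infty$ are sound.
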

\begin{proof}
It is enough to study the fixed points of the real-valued operator $H$. To this end, let $u$ solve
\[
  u \in H_{ \frac\beta{L+\alpha},b}\left(\frac1{L+\alpha}(L u -g)\right)
\]
for given $g\in \R$, where we later will replace $g$ by $\nabla f(u^*)(x)$.
We will apply the results of \cref{lem23}, where we have to set $s:=\frac\beta{L+\alpha}$
and $q=-\frac1{L+\alpha}(L u-g)$.

{\it Case 1 ($|u|=b$):}
Suppose $u=-b$. Then by \cref{lem23} (1) this is equivalent to
$-\frac1{L+\alpha}(L (-b)-  g) \ge \max(b, b/2 + s/b)$,
which in turn is equivalent to
\[
g \ge (L+\alpha) \max(b, b/2 + s/b)-Lb .
\]
{\it Case 2 ($u=0$):}
By conditions (4)(5) of \cref{lem23}, this is equivalent to
\[
\begin{gathered}
  b\le\sqrt{2s} \ \Rightarrow \ |g| \le (L+\alpha) (b/2 + s/b),\\
  b \ge \sqrt{2s} \ \Rightarrow \ |g| \le (L+\alpha) \sqrt{2s}.
  \end{gathered}
\]
{\it Case 3 ($0<|u|<b$):}
In this case, \cref{lem23} (3) yields $u=-q = \frac1{L+\alpha}(L u-g)$,
or, equivalently $\alpha u = -g$. In addition, $|q| \in [\sqrt{2s},b]$ follows.
In case $\alpha=0$, we get $g=0$, which requires $|u|\in [\sqrt{2s},b]$.
If $\alpha>0$, then $u=\alpha^{-1}g$ follows, in addition
$\alpha^{-1}|g|\in [\sqrt{2s},b]$ follows.
\end{proof}

\begin{figure}[htb]
 \includegraphics[width=0.49\textwidth]{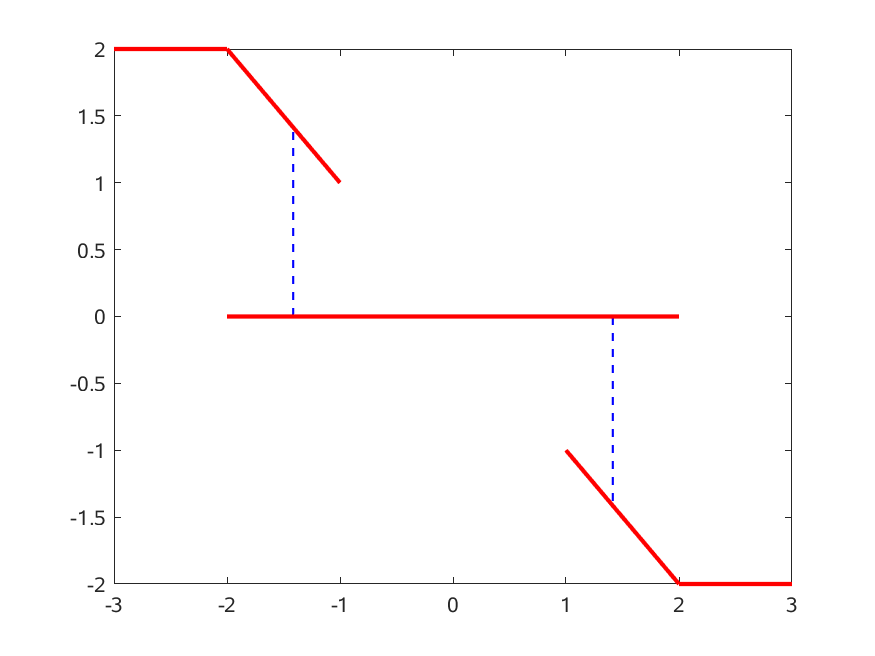}
 \includegraphics[width=0.49\textwidth]{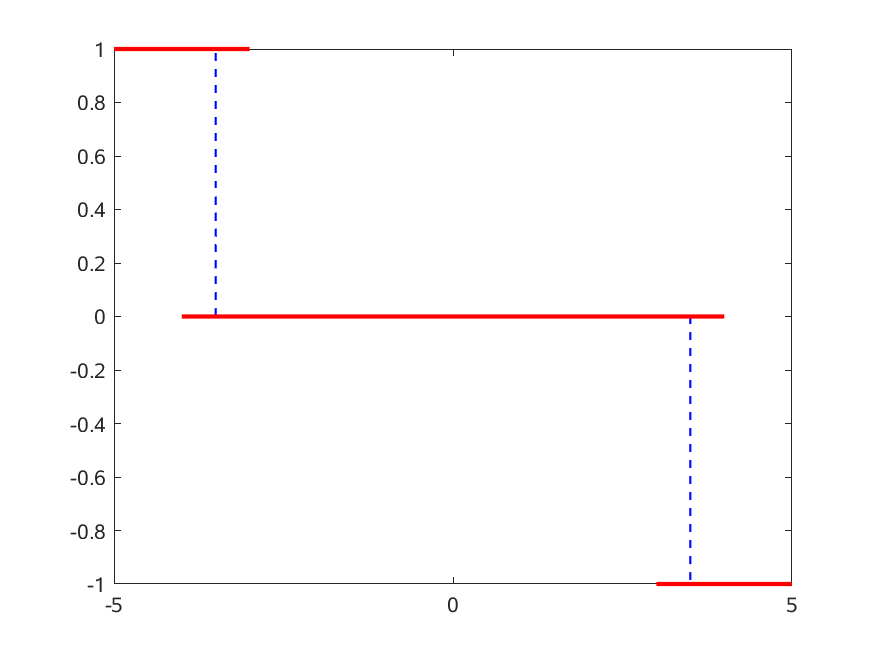}
 \caption{The set-valued map $H^{\textrm{FP}}_{\alpha,\beta,L,b}$ and the maximal monotone map $\tilde H_{\frac\beta{L+\alpha},b}$ (dashed lines)
 for parameters $(\alpha,\beta,L,b) = (1,1,1,2)$ (left) and $(\alpha,\beta,L,b)=(1,3,1,1)$ (right).}
 \label{fig2}
\end{figure}

From the definition, it follows that if $u^*$ satisfies the inclusion \eqref{eq370} with $L=0$ then it satisfies
the maximum principle \eqref{eqpmp}.
In addition, the graph of $H^{\textrm{FP}}_{\alpha,\beta,L,b}$ depends monotonically on $L$.

\begin{lemma}
Suppose $b\in (0,+\infty]$, $L'>L\ge0$, $\alpha\ge0$, $L+\alpha>0$.

Let $u,g\in \R$ such that $u\in H^{\textrm{FP}}_{\alpha,\beta,L,b}(g)$. Then $u\in H^{\textrm{FP}}_{\alpha,\beta,L',b}(g)$.
\end{lemma}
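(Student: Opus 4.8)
The plan is to fix a pair $(u,g)$ with $u\in H^{\textrm{FP}}_{\alpha,\beta,L,b}(g)$, to abbreviate $s:=\frac{\beta}{L+\alpha}$ and $s':=\frac{\beta}{L'+\alpha}$, and to use that $L'>L$ forces $s'<s$ and $L'+\alpha>L+\alpha$. I would then argue branch by branch according to the (fixed, given) value of $u$, showing in each instance that the defining inequality of \cref{lem317} is \emph{relaxed} rather than tightened when $L$ is replaced by $L'$. Since $u$ is prescribed, this reduces the matter to tracking how three scalar thresholds move with $L$, and the only genuine subtlety is that increasing $L$ decreases $s$ and may switch the configuration from the regime $\sqrt{2s}>b$ to the regime $\sqrt{2s}\le b$.

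The key preliminary is the identity $(L+\alpha)s=\beta$, which rewrites the otherwise $L$-dependent Case~2 thresholds in closed form. For the branch $u=0$ the admissible region is $|g|\le T_0(L)$, where $T_0(L)=(L+\alpha)\sqrt{2s}=\sqrt{2\beta(L+\alpha)}$ if $\sqrt{2s}\le b$ and $T_0(L)=(L+\alpha)(b/2+s/b)=\frac{(L+\alpha)b}{2}+\frac{\beta}{b}$ if $\sqrt{2s}>b$. Both expressions are strictly increasing in $L$, and at the switching value $L+\alpha=2\beta/b^2$ (where $\sqrt{2s}=b$) each equals $2\beta/b$; hence $T_0$ is continuous and nondecreasing. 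Thus $|g|\le T_0(L)\le T_0(L')$, and since $T_0(L')$ is exactly the $u=0$ threshold at level $L'$ in either regime, the $u=0$ condition is inherited.

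For the branch $u=-b$ (and symmetrically $u=b$, relevant only for $b<+\infty$) the condition reads $g\ge\tau(L)$, with $\tau(L)=\alpha b$ if $\sqrt{2s}\le b$ and, using $(L+\alpha)s=\beta$ again, $\tau(L)=(L+\alpha)(b/2+s/b)-Lb=\frac{\alpha b}{2}+\frac{\beta}{b}-\frac{Lb}{2}$ if $\sqrt{2s}>b$. The Case~2 expression is strictly decreasing in $L$ and again equals $\alpha b$ at the switching value, so $\tau$ is continuous and nonincreasing; therefore $g\ge\tau(L)\ge\tau(L')$ and the condition persists at $L'$. For the diagonal branch $0<|u|<b$ (condition~(d) when $\alpha>0$, condition~(e) when $\alpha=0$) the equality $\alpha u=-g$, respectively $g=0$, does not involve $L$ at all, while the admissible interval is $|u|\in[\sqrt{2s},b]$, respectively $u\in[\sqrt{2s},b]$. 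Since $s'<s$ yields $\sqrt{2s'}<\sqrt{2s}$, this interval only enlarges, so membership is preserved; one notes that such a branch can occur only in the regime $\sqrt{2s}\le b$, and $L'$ then also lies in that regime because $s'<s\le b^2/2$. The unconstrained case $b=+\infty$ is entirely contained in the regime $\sqrt{2s}\le b$ and lacks the $u=\pm b$ branches, so the arguments for $u=0$ and the diagonal branch already cover it.

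The main obstacle I anticipate is precisely the bookkeeping across the two regimes of \cref{lem317}: each threshold must be shown continuous across the switch $\sqrt{2s}=b$ and monotone in the correct direction, not merely monotone within a single regime. This is exactly what the identity $(L+\alpha)s=\beta$ supplies, by collapsing the Case~2 formulas to the matching Case~1 values $2\beta/b$ and $\alpha b$ at $L+\alpha=2\beta/b^2$. Once the continuity and the direction of monotonicity of $T_0$ (up) and $\tau$ (down) are secured, each branch closes by a single comparison of scalars.
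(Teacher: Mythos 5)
Your proof is correct and takes essentially the same route as the paper's: both arguments reduce the claim to the monotonicity in $L$ of the scalar thresholds in \cref{lem317}, namely that $(L+\alpha)\sqrt{2s}=\sqrt{2\beta(L+\alpha)}$ and $(L+\alpha)(b/2+s/b)=(L+\alpha)b/2+\beta/b$ (upper bounds on $g$) increase with $L$, while $(L+\alpha)(b/2+s/b)-Lb=\alpha b/2+\beta/b-Lb/2$ (a lower bound) decreases and the interval $[\sqrt{2s},b]$ enlarges. Your explicit check that the piecewise-defined thresholds match continuously across the regime switch $\sqrt{2s}=b$ (both $u=0$ thresholds equal $2\beta/b$ there, both $u=\pm b$ thresholds equal $\alpha b$) makes rigorous a step the paper's proof leaves implicit.
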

\begin{proof}
 Define $s(L):= \frac\beta{L+\alpha}$. Then $L\mapsto s(L)$ is monotonically decreasing, $L\mapsto (L+\alpha)\sqrt{2s}$
 is monotonically increasing.
 In addition, it holds
 \[
  (L+\alpha)\left(\frac b2 + \frac{s(L)}b\right)-Lb = -\frac b2 L  + \alpha \frac b2 + \frac\beta b,
 \]
hence the mapping $L\mapsto (L+\alpha)(\frac b2 + \frac{s(L)}b)-Lb$ is monotonically decreasing.
This shows that all conditions in \cref{lem317} that are lower bounds on $g$ are monotonically decreasing with $L$,
while all upper bounds on $g$ are monotonically increasing.
And the claimed monotonicity is proven.
\end{proof}

This result suggests that choosing $L$ close to zero is favorable,
which contradicts with the assumption $L>L_f$ in this section.
To overcome this limitation, we will investigate the algorithm with variable step-sizes.

\subsection{IHT with variable step-size}\label{sec33}

Let us introduce the algorithm with variable step-size.
Here, we will replace the assumption $L>L_f$ by a suitable decrease condition, which in the analysis
acts as an replacement of \eqref{eq227} in \cref{lem28}.
In addition, the implementation of the algorithm does not need the knowledge about the Lipschitz constant of $\nabla f$.

\begingroup
\renewcommand\thealgo{\textbf{IHT-LS}}
\begin{algo}[IHT algorithm with variable step-size]\label{A2}
Choose $\eta>0$, $u_0\in U_{ad}$. Set $k=0$.
\begin{enumerate}
 \item Determine $L_k\ge0$ such that the global solution $u_{k+1}$ of
 \[
  \min_{u\in U_{ad}} f(u_k) + \nabla f(u_k)(u-u_k) + \frac {L_k}2 \|u-u_k\|_{L^2(\Omega)}^2 + g(u)
 \]
 satisfies
 \be\label{eq360}
  \eta \|u_{k+1}-u_k\|_{L^2(\Omega)}^2 \le (f(u_k)-g(u_k)) - (f(u_{k+1})+g(u_{k+1})).
 \ee
 \item Set $k:=k+1$ and go to step 1.
\end{enumerate}
\end{algo}
\endgroup

Due to the results of the previous section, the choice
 $L_k \ge L_f + \eta$ satisfies the decrease condition of \cref{A2}.
In numerical computations, we used the following building blocks for a line-search strategy.
Recall that $L_k^{-1}$ corresponds to a step-size.

\begin{enumerate}
 \item Try $L_k=0$ first.
 \item Starting with a given initial guess $\hat L^0$, do an Armijo-like back-tracking: Test values
 $L_k = \hat L^0 \theta^i$ with $\theta>1$.

 \item If initial guess $\hat L^0$ is accepted, then do widening of step-sizes: Test values $L_k = \hat L^0 \theta^{-i}$ with $\theta>1$.
\end{enumerate}

Thanks to the decrease condition \eqref{eq360},
the convergence theory
of the previous section carries over to \cref{A2}.
Note in addition, that no conditions are imposed on the sequence $(L_k)$.

\begin{theorem}
 Let $(u_k)$ be a sequence of iterates generated by \cref{A2}.
 Then it holds:
 \begin{enumerate}
  \item The sequences $(u_k)$ and $(\nabla f(u_k))$ are bounded in $L^2(\Omega)$ if $\alpha>0$ or $b>0$.
  \item The sequence $(f(u_k) + g(u_k))$ is monotonically decreasing and converging.
  \item $\|u_{k+1}-u_k\|_{L^2(\Omega)} \to 0$.
  \item $\sum_{k=1}^\infty\|\chi_k-\chi_{k+1}\|_{L^1(\Omega)}< + \infty$.
  \item $\chi_k \to \chi$ in $L^1(\Omega)$ for some characteristic function $\chi$.
 \end{enumerate}
Let $u^*\in U_{ad}$ be a weak sequential limit point of $(u_k)$ in $L^2(\Omega)$.
Then it holds
\[
 f(u^*)+g(u^*) \le \lim_{k\to\infty} (f(u_k) +g(u_k))
\]
and
\[
(1-\chi)u^*=0.
\]
\end{theorem}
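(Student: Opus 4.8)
The plan is to exploit the remark preceding \cref{A2}: the descent condition \eqref{eq360} is engineered to play exactly the role that the contraction inequality \eqref{eq227} played in the proof of \cref{lem28}. Concretely, \eqref{eq360} gives
\[
 \eta\|u_{k+1}-u_k\|_{L^2(\Omega)}^2 \le (f(u_k)+g(u_k)) - (f(u_{k+1})+g(u_{k+1})),
\]
so the non-negativity of its left-hand side immediately yields that $(f(u_k)+g(u_k))$ is non-increasing (item 2); combined with the lower bound on $f$ from \cref{ass1} and $g\ge0$, the sequence is bounded below and hence convergent. Summing \eqref{eq360} over $k=1,\dots,n$ telescopes to
\[
 \eta\sum_{k=1}^n\|u_{k+1}-u_k\|_{L^2(\Omega)}^2 \le (f(u_1)+g(u_1)) - \lim_{k\to\infty}(f(u_k)+g(u_k)),
\]
whose right-hand side is finite; letting $n\to\infty$ gives $\sum_k\|u_{k+1}-u_k\|_{L^2(\Omega)}^2<\infty$ and in particular $\|u_{k+1}-u_k\|_{L^2(\Omega)}\to0$ (item 3).

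For the boundedness in item 1 I would reproduce verbatim the argument of \cref{lem28}: if $\alpha>0$ the weak coercivity of $g$ turns the boundedness of $(f(u_k)+g(u_k))$ into boundedness of $(u_k)$ in $L^2(\Omega)$, while if $U_{ad}$ is bounded the iterates are trivially bounded; in either case $\|\nabla f(u_k)\|_{L^2(\Omega)}\le\|\nabla f(0)\|_{L^2(\Omega)}+L_f\|u_k\|_{L^2(\Omega)}$ bounds $(\nabla f(u_k))$. For items 4 and 5 I would invoke \cref{lem27} with $p=2$, giving $\|u_{k+1}-u_k\|_{L^2(\Omega)}^2\ge\sigma^2\|\chi_k-\chi_{k+1}\|_{L^1(\Omega)}$ and hence, via item 3, summability and the Cauchy property of $(\chi_k)$ in $L^1(\Omega)$; the $L^1$-limit $\chi$ is $\{0,1\}$-valued as a limit of characteristic functions, so it is itself a characteristic function.

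The one genuinely new point, and the step I expect to be the main obstacle, is that the threshold $\sigma$ of \cref{cor25} depends on the step-size through $\sigma=\min(b,\sqrt{2\beta/(L+\alpha)})$, and in \cref{A2} the step-size $L_k$ varies. Carrying \cref{lem27} across two consecutive iterates then only yields $\|u_{k+1}-u_k\|_{L^2(\Omega)}^2\ge\min(\sigma_{k-1},\sigma_k)^2\,\|\chi_k-\chi_{k+1}\|_{L^1(\Omega)}$, so summability of $(\|\chi_k-\chi_{k+1}\|_{L^1(\Omega)})$ requires a uniform positive lower bound $\bar\sigma:=\inf_k\sigma_k>0$, equivalently a uniform upper bound on $(L_k)$. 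I would obtain this from the line-search: since the results of \cref{sec32} show that \eqref{eq360} holds as soon as $L_k\ge L_f+\eta$, a back-tracking search stops at some $L_k$ no larger than a fixed multiple of $L_f+\eta$, so $(L_k)$ is bounded and $\bar\sigma>0$ as long as $b>0$. With $\bar\sigma$ fixed, $\sum_k\|\chi_k-\chi_{k+1}\|_{L^1(\Omega)}\le\bar\sigma^{-2}\sum_k\|u_{k+1}-u_k\|_{L^2(\Omega)}^2<\infty$ closes items 4 and 5. Finally, the two limit-point assertions coincide with the conclusions of \cref{thm313}, whose proof uses only $\chi_k\to\chi$ in $L^1(\Omega)$ (hence in $L^2(\Omega)$, as the $\chi_k$ are uniformly bounded), the identity $(1-\chi_k)u_k=0$, and the weak lower semicontinuity of $f$ from \cref{ass1}, none of which refer to a fixed step-size, so that argument transfers without change.
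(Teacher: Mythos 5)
Your proposal follows the same route as the paper's own proof, which consists of a single sentence: repeat the arguments of \cref{lem28} and \cref{thm313} with \eqref{eq227} replaced by the decrease condition \eqref{eq360}. Your handling of items 1--3 (telescoping \eqref{eq360}, lower bound on $f+g$, coercivity of $g$ resp.\ boundedness of $U_{ad}$, Lipschitz bound on $\nabla f$) and of the two limit-point assertions (which indeed only use $\chi_k\to\chi$ in $L^1(\Omega)$ hence $L^2(\Omega)$, the identity $(1-\chi_k)u_k=0$, and weak lower semicontinuity of $f$ and of the $L^2$-norm) is exactly the paper's argument. Where you go beyond the paper is in items 4--5: you correctly observe that the threshold of \cref{cor25}, and hence the constant in \cref{lem27}, depends on the step-size via $\sigma_k=\min\bigl(b,\sqrt{2\beta/(L_k+\alpha)}\bigr)$, so that for variable step-sizes one only obtains $\|u_{k+1}-u_k\|_{L^2(\Omega)}^2\ge\min(\sigma_{k-1},\sigma_k)^2\,\|\chi_k-\chi_{k+1}\|_{L^1(\Omega)}$, and the summability argument closes only if $\inf_k\sigma_k>0$, equivalently $\sup_k L_k<+\infty$. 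The paper glosses over this entirely; indeed its remark that ``no conditions are imposed on the sequence $(L_k)$'' is not supported by the proof it sketches, since without an upper bound on $(L_k)$ the passage from $\sum_k\|u_{k+1}-u_k\|_{L^2(\Omega)}^2<\infty$ to $\sum_k\|\chi_k-\chi_{k+1}\|_{L^1(\Omega)}<\infty$ breaks down. So your refinement is a genuine improvement, not pedantry.

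One caveat about your patch: \cref{A2} as stated does not mandate back-tracking from a fixed initial guess; it merely requires some $L_k\ge0$ satisfying \eqref{eq360}, and the strategies {\bf BT}, {\bf BT-W}, {\bf BT-0} are described only as implementation building blocks. Hence $\sup_k L_k<+\infty$ is an additional hypothesis, not something you can derive from the algorithm as stated; you should either assume it explicitly or restrict attention to line searches that guarantee it (back-tracking from a fixed $\hat L^0$ does, since by \eqref{eq227} any $L_k\ge L_f+2\eta$ satisfies \eqref{eq360}, so the search terminates at a bounded value). With that hypothesis made explicit, your proof is complete and, on this point, more careful than the paper's.
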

\begin{proof}
The proof is exactly as the proofs of \cref{lem28} and \cref{thm313} with the exception
that \eqref{eq227} has to be replaced by the decrease condition \eqref{eq360}.
\end{proof}

In order to transfer the strong convergence result of \cref{thm316}, we have to study the
upper semi-continuity of
the mapping $s\rightrightarrows \gph H_{s,b}$.

\begin{lemma}\label{lem320}
 The mapping $s\rightrightarrows \gph H_{s,b}$ is upper semi-continuous.
 That is, for sequences $(u_n), (q_n), (s_n)$ of real numbers with $u_n\to u$, $q_n\to q$, $s_n\to s$,
 $s_n\ge0$, $u_n \in H_{s_n,b}(q_n)$  it follows $u\in H_{s,b}(q)$.
\end{lemma}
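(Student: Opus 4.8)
The plan is to translate the set-valued inclusion into a global optimality statement and then to apply the standard stability of minimizers under $\Gamma$-type convergence of the objectives. By \cref{lem23} together with the definition of $H_{s,b}$, membership $u\in H_{s,b}(q)$ is equivalent to $u$ being a global minimizer over $[-b,b]$ of the scalar functional
\[
 \phi_{s,q}(v):= \tfrac12 v^2 - qv + s|v|_0;
\]
indeed, conditions \ref{lem23_1}--\ref{lem23_5} evaluated at the pair $(u,-q)$ describe exactly the global solutions of \eqref{eq204} with linear coefficient $-q$. Hence the hypothesis reads $u_n\in\arg\min_{|v|\le b}\phi_{s_n,q_n}(v)$, and the assertion to be shown is $u\in\arg\min_{|v|\le b}\phi_{s,q}(v)$.

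With this reformulation, I would first record that feasibility is stable: $|u_n|\le b$ and $u_n\to u$ give $|u|\le b$. The core step is the $\liminf$-inequality
\[
 \liminf_{n\to\infty}\phi_{s_n,q_n}(u_n)\ge \phi_{s,q}(u).
\]
The smooth part poses no difficulty, since $\tfrac12 u_n^2 - q_n u_n\to\tfrac12 u^2 - qu$ by $u_n\to u$ and $q_n\to q$. For the term $s_n|u_n|_0$ I would invoke the lower semicontinuity of $v\mapsto|v|_0$ on $\R$ together with $s_n\ge 0$, $s_n\to s$: if $u\ne0$ then $u_n\ne0$ for all large $n$ and $s_n|u_n|_0\to s=s|u|_0$, whereas if $u=0$ then trivially $s_n|u_n|_0\ge0=s|u|_0$, so $\liminf_n s_n|u_n|_0\ge s|u|_0$ in either case. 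Summing the two contributions yields the $\liminf$-inequality.

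To conclude, I would test optimality against an arbitrary fixed competitor $w$ with $|w|\le b$. Because $|w|_0$ is then constant, the objective values converge, $\phi_{s_n,q_n}(w)\to\phi_{s,q}(w)$, and combining this with the per-index optimality $\phi_{s_n,q_n}(u_n)\le\phi_{s_n,q_n}(w)$ and the $\liminf$-inequality gives
\[
 \phi_{s,q}(u)\le \liminf_{n\to\infty}\phi_{s_n,q_n}(u_n)\le \lim_{n\to\infty}\phi_{s_n,q_n}(w)=\phi_{s,q}(w).
\]
Since $w$ was arbitrary in $[-b,b]$, this shows $u\in H_{s,b}(q)$. The only delicate point is the discontinuous term $|\cdot|_0$; the key observation is that, because $s_n\ge0$, its lower semicontinuity pushes the inequality in exactly the direction needed, so in contrast to the function-space setting no compactness argument or identification of a limiting support is required here.
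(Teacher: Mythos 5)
Your proof is correct, but it reaches the conclusion by a genuinely different route than the paper. The paper's proof is essentially one line: it rewrites the claim as closedness of the set $H = \{ (u,q,s) :\ s\ge 0,\ u\in H_{s,b}(q)\}$ and observes that, by the explicit characterization in \cref{lem23}, $H$ is a finite union of sets defined by equalities and non-strict inequalities among functions continuous in $(u,q,s)$, hence a finite union of closed sets, hence closed. You instead never touch the case-by-case formulas; you use only the variational meaning of the operator, namely $u\in H_{s,b}(q) \Leftrightarrow u\in\arg\min_{|v|\le b}\bigl(\tfrac12 v^2 - qv + s|v|_0\bigr)$, and run a $\Gamma$-convergence-type stability argument: a liminf inequality along the minimizers (where $s_n\ge 0$ and the lower semicontinuity of $|\cdot|_0$ on $\R$ enter), plus convergence of the objective values at each fixed competitor $w$, whose term $|w|_0$ is independent of $n$. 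Both mechanisms are sound, and your handling of the only delicate term $s_n|u_n|_0$ via the dichotomy $u\ne 0$ versus $u=0$ is correct. What each approach buys: the paper's argument is shorter once \cref{lem23} is available, but it is tied to the specific piecewise description of $H_{s,b}$ and implicitly requires checking that every one of the five conditions is closed in all three variables; your argument is slightly longer but more robust and transferable, since it applies verbatim to any thresholding map defined as a parametric argmin of a continuous function plus a nonnegative multiple of $|\cdot|_0$ (for instance, the prox map of the switching penalty $|u_1u_2|_0$ used in the numerical section), without recomputing an explicit characterization.
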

\begin{proof}
Let us define the set
\[
 H := \{ (u,q,s): \ s\ge0, \ u\in H_{s,b}(q)\}.
\]
Then the claim is equivalent to the closedness of $H$.
This closedness is a direct consequence of the characterization of $H_{s,b}$ in \cref{lem23},
since all the conditions given there are continuous with respect to $(u,q,s)$.
\end{proof}

\begin{theorem}
Suppose $\alpha>0$.
Let us assume complete continuity of $\nabla f$ from $L^2(\Omega)$ to $L^2(\Omega)$.
Let $u^*\in U_{ad}$ be a weak sequential limit point in $L^2(\Omega)$ of the iterates $(u_k)$ of \cref{A2},
that is $u_{k_n} \rightharpoonup u^*$ in $L^2(\Omega)$ for a subsequence.
Assume that the corresponding sequence $(L_{k_n})$ converges to some $L\ge0$.

Then $u^*$ is a strong sequential limit point of $(u_k)$ in $L^q(\Omega)$ for all $q<2$.
Moreover, $u^*$ is a fixed point of the hard thresholding map, i.e., it satisfies
\[
 u^*(x) \in
 H_{ \frac\beta{L+\alpha},b}\left(\frac1{L+\alpha}\left(L u^*(x)-  \nabla f(u^*)(x)\right)\right)
  \quad \text{ for a.a.\ } x\in \Omega.
\]
\end{theorem}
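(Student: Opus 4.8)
The plan is to mirror the proof of \cref{thm316}, the only substantive change being that the fixed parameter $L$ is everywhere replaced by the varying step-size $L_k$, whose convergence $L_{k_n}\to L$ along the subsequence is now a standing hypothesis rather than a given constant. Starting from the necessary optimality condition of \cref{lem29} (valid verbatim with $L$ replaced by $L_k$, since \cref{lem29} rests on \cref{cor25}, which holds for arbitrary step-size), one obtains the representation
\[
 u_{k+1} = \chi_{k+1} u_{k+1} = \chi_{k+1}\proj_{U_{ad}}\left(-\frac1\alpha\left(\nabla f(u_k) + L_k(u_{k+1}-u_k)\right)\right).
\]

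First I would establish strong convergence of the right-hand side along the subsequence. The convergence properties already established for \cref{A2} give $u_{k+1}-u_k\to0$ in $L^2(\Omega)$ and $\chi_k\to\chi$ in $L^p(\Omega)$ for all $p<\infty$. Because $L_{k_n}\to L$, the sequence $(L_{k_n})$ is bounded, so the product $L_{k_n}(u_{k_n+1}-u_{k_n})\to0$ in $L^2(\Omega)$. Complete continuity of $\nabla f$ together with $u_{k_n}\rightharpoonup u^*$ yields $\nabla f(u_{k_n})\to\nabla f(u^*)$ strongly in $L^2(\Omega)$, whence $\nabla f(u_{k_n})+L_{k_n}(u_{k_n+1}-u_{k_n})\to\nabla f(u^*)$ in $L^2(\Omega)$. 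Since $\proj_{U_{ad}}$ is nonexpansive and $\chi_{k_n+1}\to\chi$ boundedly, the product converges in $L^q(\Omega)$ for every $q<2$, forcing $u_{k_n}\to u^*$ in $L^q(\Omega)$ and identifying the limit as $u^* = \chi\proj_{U_{ad}}\bigl(-\frac1\alpha\nabla f(u^*)\bigr)$.

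Next I would derive the fixed-point inclusion. The iterates satisfy the pointwise relation \eqref{eq223} with parameter $L_k$, namely
\[
 u_{k+1}(x)\in H_{\frac{\beta}{L_k+\alpha},b}\left(\frac1{L_k+\alpha}\bigl(L_k u_k(x)-\nabla f(u_k)(x)\bigr)\right).
\]
Passing to a further subsequence, the strong $L^q$-convergence of $u_{k_n}$ and the $L^2$-convergence of $\nabla f(u_{k_n})$ furnish a.e.\ convergence of both quantities; combined with $L_{k_n}\to L$, the threshold parameter $s_{k_n}:=\beta/(L_{k_n}+\alpha)\to\beta/(L+\alpha)$ and the shifted argument $\frac1{L_{k_n}+\alpha}\bigl(L_{k_n}u_{k_n}(x)-\nabla f(u_{k_n})(x)\bigr)\to\frac1{L+\alpha}\bigl(Lu^*(x)-\nabla f(u^*)(x)\bigr)$ for almost every $x$. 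The upper semicontinuity of $s\rightrightarrows\gph H_{s,b}$ from \cref{lem320} then lets me pass to the limit in the inclusion pointwise, yielding the claimed fixed-point equation for $u^*$.

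The main obstacle, and the reason \cref{thm316} does not apply directly, is that the threshold parameter $s_{k_n}=\beta/(L_{k_n}+\alpha)$ now varies with the index, so the closedness of the graph of a single operator $H_{s,b}$ (as invoked via \cref{cor35}) is no longer enough to close the limit. The convergence hypothesis $L_{k_n}\to L$, feeding into the \emph{joint} upper semicontinuity in $s$ supplied by \cref{lem320}, is precisely what bridges this gap: without control of $(L_{k_n})$ neither the parameter $s_{k_n}$ nor the shifted argument of $H$ would converge, and the fixed-point characterization could not be extracted.
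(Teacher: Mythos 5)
Your proof is correct and follows exactly the route the paper takes: the strong-convergence argument of \cref{thm316} carried over with $L$ replaced by the convergent sequence $(L_{k_n})$, and the fixed-point inclusion closed pointwise a.e.\ via the joint upper semicontinuity of $s\rightrightarrows\gph H_{s,b}$ from \cref{lem320}. The paper's own proof is only a two-line reference to these two ingredients; your write-up supplies precisely the details it leaves implicit, including the correct observation that the varying threshold $s_{k_n}=\beta/(L_{k_n}+\alpha)$ is why \cref{cor35} alone no longer suffices.
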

\begin{proof}
The strong convergence follows as in the proof of \cref{thm316}.
The fixed-point property uses the upper-semicontinuity provided by \cref{lem320}.
\end{proof}

\subsection{IHT in the unsolvable case}
\label{sec34}

In this section, we investigate the case that the original problem \eqref{eq001}--\eqref{eq002} is unsolvable.
Here, it is natural to replace $g(u) = \frac\alpha2\|u\|_{L^2(\Omega)} + \beta \|u\|_0$
by its biconjugate (or convexification) given by
\[
 g^{**}(u):= \int_\Omega g_\alpha(u(x))\dx
\]
with integrand
\[
g_\alpha(u) =
\begin{cases}
 \beta + \frac\alpha2 |u|^2 & \text{ if } |u| \ge \sqrt{\frac{2\beta}{\alpha}},\\
 \sqrt{2\alpha\beta}|u| & \text{ if } |u| \le \sqrt{\frac{2\beta}{\alpha}}.
\end{cases}
\]
The resulting functional $g^{**}$ is convex but not strictly convex and continuous from $L^2(\Omega)$ to $\R$.
In addition, we have $g^{**}(u) \ge \frac\alpha2\|u\|_{L^2(\Omega)}^2$.
Hence under our assumptions on $f$, the (partially) convexified problem
\be\label{eq380}
 \min_{u\in U_{ad}} f(u) + g^{**}(u)
\ee
is solvable.
In addition, every stationary point of the original problem \eqref{eq001}--\eqref{eq002}
satisfying the maximum principle \eqref{eqpmp} is a stationary point of \eqref{eq380}.

\begin{lemma}
 Let $\bar u\in U_{ad}$ satisfy \eqref{eqpmp}. Then it holds
 \be\label{eq381}
  0 \in \nabla f(\bar u) + \partial g^{**}(\bar u).
 \ee
\end{lemma}
\begin{proof}
 Since global minimizers of a function are global minimizers of its biconjugate, the maximum principle \eqref{eqpmp}
 implies
\be\label{eq382}
 \bar u(x) = \arg\min_{|u|\le b} \nabla f(\bar u)(x)\cdot u + g_\alpha(u).
\ee
Since $g_\alpha$ is convex, this is equivalent to
\[
 0 \in \nabla f(\bar u)(x) + \partial g_\alpha(\bar u(x)),
\]
which is the claim.
\end{proof}

This result implies that if the original problem is unsolvable, every minimizer of the convexified problem
satisfies \eqref{eq381} and \eqref{eq382} but not the maximum principle \eqref{eqpmp}.
With \cref{cor25} this implies that
\[
 \bar u(x) \not\in H_{\frac\beta\alpha,b} ( - \nabla f(\bar u))(x) )
\]
holds on a set of positive measure. We will now show that such a control $\bar u$ cannot be a fixed point of \cref{A1}.
We first discuss the scalar situation.

\begin{lemma}\label{lem324}
Let $g\in\R$ be given. Let $\bar u\in \R$ be such that
 \[
   \bar u = \arg\min_{|u|\le b} g \cdot u + g_\alpha(u).
 \]
 Let $\bar u$ be not a solution of
\[
 \min_{|u|\le b} g\cdot u + \frac\alpha2 |u|^2 + \beta|u|_0.
\]
Then $\bar u$ is not a global minimum of
\be\label{eq383}
  \min_{|u|\le b} g\cdot (u-\bar u) + \frac L2(u-\bar u)^2 + \frac\alpha2 |u|^2 + \beta|u|_0.
\ee
for all $L>0$.
\end{lemma}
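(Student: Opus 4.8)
The plan is to show that $\bar u$ fails to lie in the image of the hard-thresholding map that, by \cref{cor25}, characterises the global minimisers of \eqref{eq383}. Concretely, write the original scalar functional as $\phi(u):=g\,u+\frac\alpha2|u|^2+\beta|u|_0$ and its partial convexification as $\phi^{**}(u):=g\,u+g_\alpha(u)$. With $s:=\frac{\beta}{L+\alpha}$ and $q:=\frac1{L+\alpha}(L\bar u-g)$, a point $u$ solves \eqref{eq383} if and only if $u\in H_{s,b}(q)$, so it suffices to prove $\bar u\notin H_{s,b}(q)$ for every $L>0$. The first step is to locate $\bar u$: since $\phi\ge\phi^{**}$ with equality at every point where $u=0$ or $|u|\ge\sqrt{2\beta/\alpha}$, if $\bar u$ were such a point then $\phi(v)\ge\phi^{**}(v)\ge\phi^{**}(\bar u)=\phi(\bar u)$ for all feasible $v$, making $\bar u$ a minimiser of $\phi$ — contrary to hypothesis. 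Hence $\bar u$ must lie in the open gap $0<|\bar u|<\sqrt{2\beta/\alpha}$; in particular $\bar u\ne0$.

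In the interior case $|\bar u|<b$, I would use convex optimality of $u\mapsto\phi^{**}(u)$ on $(-b,b)$. On the gap one has $g_\alpha(u)=\sqrt{2\alpha\beta}\,|u|$, hence $g_\alpha'(\bar u)=\sqrt{2\alpha\beta}\,\operatorname{sign}(\bar u)$, and the stationarity condition $0=g+g_\alpha'(\bar u)$ pins down $g=-\sqrt{2\alpha\beta}\,\operatorname{sign}(\bar u)$. Substituting this into $q$ gives $q=\frac{L\bar u+\sqrt{2\alpha\beta}\,\operatorname{sign}(\bar u)}{L+\alpha}$, which has the sign of $\bar u$. Among the branches of \cref{lem23}, the only one admitting a value with $0<|u|<b$ is item~\ref{lem23_3}, which would require $\bar u=q$; but $\bar u=q$ is equivalent to $\alpha\bar u=\sqrt{2\alpha\beta}\,\operatorname{sign}(\bar u)$, i.e. $|\bar u|=\sqrt{2\beta/\alpha}$, contradicting $|\bar u|<\sqrt{2\beta/\alpha}$. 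The remaining branches only produce $u\in\{0,\pm b\}\ne\bar u$. Thus $\bar u\notin H_{s,b}(q)$, and $\bar u$ is not a global minimiser of \eqref{eq383} for any $L>0$.

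The delicate point — and the step I expect to be the main obstacle — is the boundary case $|\bar u|=b$, which can occur when $b<\sqrt{2\beta/\alpha}$. There the optimality of $\bar u$ for the convexified problem yields only the one-sided inequality $g\le-\sqrt{2\alpha\beta}$ (for $\bar u=b$; symmetrically for $\bar u=-b$) rather than an equality, so the clean substitution above is unavailable and one cannot simply invoke item~\ref{lem23_3}. Here I would instead compare directly the threshold of item~\ref{lem23_1}/\ref{lem23_2} of \cref{lem23} that governs $u=\pm b\in H_{s,b}(q)$ against the convexified-optimality bound and the bound expressing that $\bar u$ does \emph{not} solve the original scalar problem, exploiting the strict AM--GM inequality $\sqrt{2\alpha\beta}<\frac\alpha2 b+\frac\beta b$ valid for $b<\sqrt{2\beta/\alpha}$. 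This comparison is where the argument is genuinely tight, and the interior reasoning alone already settles the statement whenever $\sqrt{2\beta/\alpha}\le b$ — in particular in the genuinely unsolvable regime $b=+\infty$, where the boundary case is vacuous.
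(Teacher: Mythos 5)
Your treatment of the interior case $|\bar u|<b$ is correct, and it is in substance the paper's own proof: the paper localizes $\bar u$ in the gap $0<|\bar u|<\sqrt{2\beta/\alpha}$ by exactly your comparison of the functional with its convexification, deduces $|g|=\sqrt{2\alpha\beta}$ from the optimality condition $0\in g+\partial g_\alpha(\bar u)$, and then, instead of running through the branches of \cref{lem23}, simply notes that the smooth part of \eqref{eq383} has derivative $g+\alpha\bar u\ne 0$ at $\bar u$ while $|u|_0$ is constant near $\bar u\ne 0$, so $\bar u$ is not even a local minimizer. Your $H_{s,b}$ bookkeeping and this derivative argument are interchangeable; both require $\bar u$ to be an interior point of $[-b,b]$.

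The boundary case $|\bar u|=b$ (possible only when $b<\sqrt{2\beta/\alpha}$), which you flag and leave open, is therefore a genuine gap in your proposal --- but carrying out the comparison you sketch reveals something stronger: the gap cannot be closed, because the statement is \emph{false} in that case. For $\bar u=b$, convexified optimality gives only $g\le-\sqrt{2\alpha\beta}$, while ``$\bar u$ does not solve the original scalar problem'' gives, via item \ref{lem23_2} of \cref{lem23} (scaled by $\alpha^{-1}$), $g>-\bigl(\tfrac{\alpha b}{2}+\tfrac{\beta}{b}\bigr)$; by the strict AM--GM inequality you quote, this window is nonempty. Inside it, applying item \ref{lem23_2} to \eqref{eq383} through \cref{cor25}, with $q=\tfrac{1}{L+\alpha}(Lb-g)$ and $s=\tfrac{\beta}{L+\alpha}$, shows that $\bar u=b$ \emph{is} a global minimizer of \eqref{eq383} exactly when $L\ge\tfrac{2}{b}\bigl(g+\tfrac{\alpha b}{2}+\tfrac{\beta}{b}\bigr)$, a finite positive threshold. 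Concretely, take $\alpha=2$, $\beta=1$, $b=\tfrac12$, $g=-\tfrac94$, $L=2$: then $\bar u=\tfrac12$ is the unique minimizer of $gu+g_\alpha(u)$ on $[-\tfrac12,\tfrac12]$, the original scalar problem has the unique minimizer $u=0$, yet \eqref{eq383} takes the value $\tfrac54$ at $u=\tfrac12$ versus $\tfrac{11}{8}$ at $u=0$, and its smooth branch is decreasing on $[-\tfrac12,\tfrac12]$, so $\bar u$ is its unique global minimizer; equivalently, $\tfrac12\in H_{1/4,1/2}(13/16)$ by item \ref{lem23_2}. You should also know that the paper's proof shares this defect rather than resolving it: the condition $0\in g+\partial g_\alpha(\bar u)$ it invokes omits the normal cone of $[-b,b]$ at $\pm b$, and its final step (nonvanishing derivative implies not a local minimum) likewise fails at the endpoints. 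So the lemma, your argument, and the paper's argument are all correct precisely in the regime you identified, $\sqrt{2\beta/\alpha}\le b$ (in particular $b=+\infty$), where the gap localization forces interiority; for $b<\sqrt{2\beta/\alpha}$ the statement needs the additional hypothesis $|\bar u|<b$, a caveat that then propagates to \cref{thm325}.
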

\begin{proof}
By assumption, $\bar u$ is not a global minimum of the function $u\mapsto  g\cdot u + \frac\alpha2 |u|^2 + \beta|u|_0$
but a global minimum  of its convexification $u\mapsto  g \cdot u + g_\alpha(u)$.
Consequently, it follows $g_\alpha(\bar u) < \frac\alpha2 |\bar u|^2 + \beta|\bar u|_0$.
This implies $0<|\bar u|< \sqrt{\frac{2\beta}\alpha}$. The optimality condition
$0\in g + \partial g_\alpha(\bar u)$ implies $|g|=\sqrt{2\alpha\beta}$.
The derivative of the mapping $u\mapsto  g\cdot (u-\bar u) + \frac L2(u-\bar u)^2 + \frac\alpha2 |u|^2$
at $\bar u$ is equal to $g + \alpha \bar u \ne0$.
Hence, $\bar u$ is not a global minimum of this quadratic function.
Since $\bar u\ne0$ and $u\mapsto |u|_0$ is constant near $\bar u$, it follows that
$\bar u$ is not a local minimum of \eqref{eq383}.
\end{proof}

\begin{theorem}\label{thm325}
Assume that there is no admissible control satisfying the maximum principle \eqref{eqpmp}.
Let $\bar u$ be a solution of the convexified problem \eqref{eq380}.
 Then $\bar u$ is {\em not} a fixed point of \cref{A1} and \cref{A2}.
\end{theorem}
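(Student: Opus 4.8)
The plan is to reduce everything to the scalar analysis already carried out in \cref{lem324}, exploiting that each step of both algorithms minimizes an integral functional that separates pointwise. First I would record the fixed-point characterization. By \eqref{eq223}, a point $\bar u\in U_{ad}$ is a fixed point of \cref{A1} with parameter $L$ if and only if, for almost all $x$, the value $\bar u(x)$ globally minimizes
\[
 \min_{|u|\le b} \nabla f(\bar u)(x)\,(u-\bar u(x)) + \frac L2(u-\bar u(x))^2 + \frac\alpha2|u|^2 + \beta|u|_0 ,
\]
and the same characterization holds for \cref{A2} with $L$ replaced by any accepted step parameter $L_k\ge0$, since a stationary $\bar u$ gives $u_{k+1}=u_k=\bar u$ and hence satisfies the decrease condition \eqref{eq360} trivially. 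It therefore suffices to exhibit a set of positive measure on which this pointwise minimality fails.

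Next I would use that $\bar u$ solves the convexified problem \eqref{eq380}. Because $g^{**}$ is convex and $f$ is Fr\'echet differentiable, the optimality condition $0\in\nabla f(\bar u)+\partial g^{**}(\bar u)$ of \eqref{eq381} holds; and since $g^{**}$ is the integral functional of the convex integrand $g_\alpha$, its subdifferential acts pointwise, yielding, as in \eqref{eq382},
\[
 \bar u(x)=\arg\min_{|u|\le b}\nabla f(\bar u)(x)\cdot u + g_\alpha(u)\qquad\text{for a.a. }x,
\]
which is exactly the first hypothesis of \cref{lem324} with $g=\nabla f(\bar u)(x)$. Then I would invoke the hypothesis that no admissible control satisfies the maximum principle \eqref{eqpmp}. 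As \eqref{eqpmp} is precisely the statement that $\bar u(x)$ globally minimizes $u\mapsto\nabla f(\bar u)(x)\cdot u+\frac\alpha2|u|^2+\beta|u|_0$ for a.a. $x$, its failure for $\bar u$ means that
\[
 A:=\Big\{x:\ \bar u(x)\text{ does not globally minimize }\min_{|u|\le b}\nabla f(\bar u)(x)\, u+\tfrac\alpha2|u|^2+\beta|u|_0\Big\}
\]
has positive measure. On $A$ both hypotheses of \cref{lem324} are met, so \cref{lem324} shows that $\bar u(x)$ is not a global minimizer of the $L$-perturbed scalar problem \eqref{eq383} for any $L>0$; and for $L=0$ the perturbed problem coincides, up to the $x$-dependent constant $-\nabla f(\bar u)(x)\,\bar u(x)$, with the nonconvex scalar problem, so $\bar u(x)$ fails there too. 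Comparing with the fixed-point characterization of the first step, the pointwise minimality is violated on the positive-measure set $A$, whence $\bar u$ is a fixed point of neither \cref{A1} (for any $L>0$) nor \cref{A2} (for any $L_k\ge0$).

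I expect the main obstacle to be the rigorous pointwise reduction in the second step: justifying that the inclusion $0\in\nabla f(\bar u)+\partial g^{**}(\bar u)$ for the integral functional $g^{**}$ transfers to the a.e. pointwise optimality condition through $\partial g_\alpha$, and ensuring that the relevant sets are measurable so that the failure of \eqref{eqpmp} genuinely yields a positive-measure set $A$. The remaining scalar bookkeeping, including the elementary $L=0$ reduction needed for \cref{A2}, is routine once \cref{lem324} is available.
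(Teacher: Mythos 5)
Your proposal is correct and follows essentially the same route as the paper's proof: pass to the pointwise optimality condition \eqref{eq382} of the convexified problem, use the failure of the maximum principle \eqref{eqpmp} to obtain a positive-measure set on which the hypotheses of \cref{lem324} hold, and conclude that the pointwise fixed-point characterization of \cref{A1} and \cref{A2} fails there. Your explicit handling of the case $L=0$ (needed for \cref{A2}, since \cref{lem324} is stated only for $L>0$) is in fact slightly more careful than the paper, which invokes that lemma for all $L\ge0$ without comment.
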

\begin{proof}
Due to the optimality condition  $0 \in \nabla f(\bar u)(x) + \partial g_\alpha(\bar u(x))$,
it holds
\[
 \bar u(x) = \arg\min_{|u|\le b} \nabla f(\bar u)(x) \cdot u + g_\alpha(u).
\]
for almost all $x\in \Omega$.
By assumption, $\bar u$ does not satisfy the maximum principle \eqref{eqpmp}.
Hence, on a set of positive measure $A\subset \Omega$ we have that $\bar u(x)$ is not a solution of
\[
 \min_{|u|\le b}  \nabla f(\bar u)(x)\cdot u + \frac\alpha2 |u|^2 + \beta|u|_0.
\]
Then the result of \cref{lem324} implies that $\bar u$ cannot be a
local minimum of the optimization problem \eqref{eq221} for all $L\ge0$, and hence $\bar u$ is not a fixed point of the algorithms \cref{A1} and \cref{A2}.
\end{proof}

This result shows that the proximal gradient method applied to the convexified problem
might deliver suboptimal results for the original problem in the unsolvable case.
If the IHT method is started in a solution of the convexified problem that does not solve the original problem,
it will still generate points that strictly decrease the cost functional of the original problem.
Here, it is an open question how well these iterates and their weak limit points will approximate the infimum
of the cost functional.

\section{Numerical experiments}
\label{sec4}

In this section, we will present results of numerical experiments.
These were carried out in the framework of \cref{ex1}. That is, $f(u)$ is defined as
\[
 f(u) := \frac12\|y_u-y_d\|_{L^2(\Omega)}^2,
\]
where $y_u$ denotes the weak solution of the elliptic partial differential equation
\[
 -\Delta y = u \quad \text{ in } \Omega, \quad y=0 \quad \text{ on } \partial \Omega.
\]
Here, we chose $\Omega=(0,1)^2$.
The partial differential equation was discretized with piecewise linear finite elements, where the domain
was divided into a regular mesh. The controls were discretized with piecewise constant functions on the triangles.
If not mentioned otherwise, we used a discretization with $500,000$ triangles and mesh-size $h=\sqrt2/500\approx 0.0028$.
As problem data we chose
\[
 y_d(x_1,x_2) = 10 x_1 \sin(5x_1) \cos(7x_2), \quad \alpha = 0.01,\quad  \beta=0.01,\quad b = 4,
\]
which are taken from \cite{ItoKunisch2014}.
The computed solution for this problem can be seen in \cref{fig5}.
Clearly, the optimal control is discontinuous.

\begin{figure}[htb]
 \includegraphics[width=0.49\textwidth]{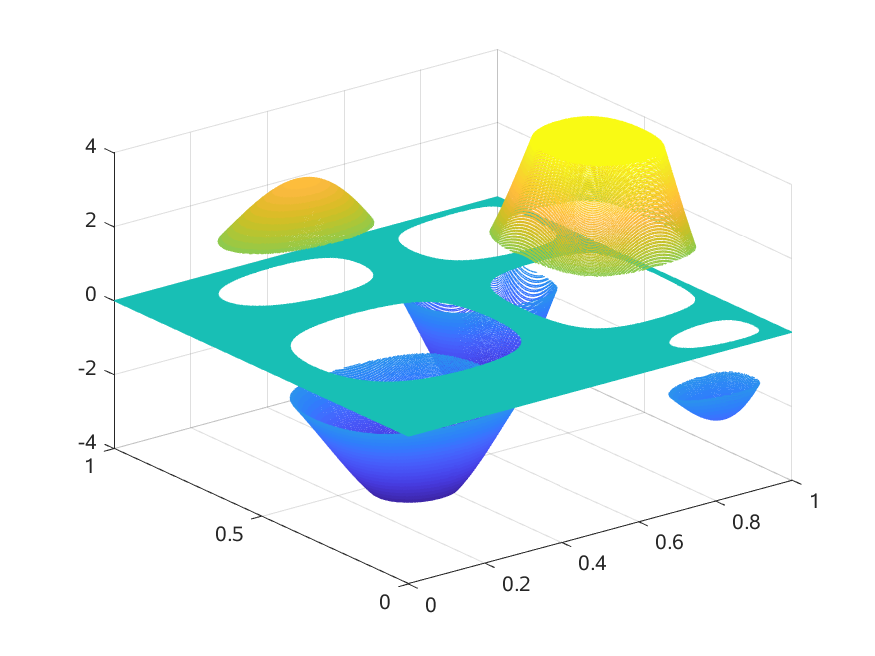}
 \includegraphics[width=0.49\textwidth]{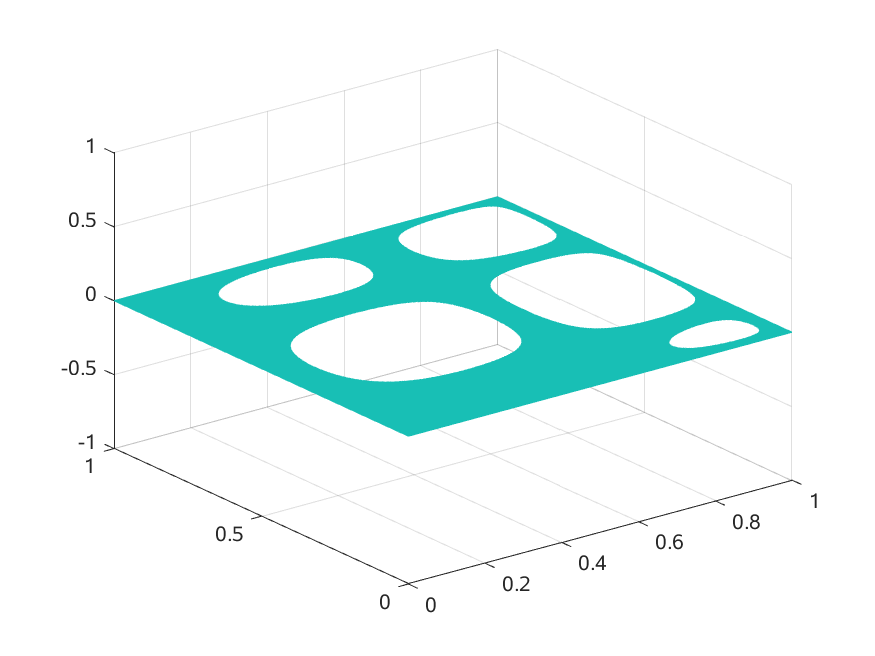}
 \caption{Optimal control $u$ (left), its support $\{x:\ u(x)\ne0\}$ (right)}
 \label{fig5}
\end{figure}

\subsection{Comparison of step-size selection strategies}
\label{sec41}

First, we will report on the different step-size selection strategies available. As already mentioned in \cref{sec33},
we tried several methods. Let us describe them in detail.
Let $\hat L^0>0$ be an initial step-size, $\theta\in(0,1)$ be a reduction factor, and $\eta>0$ a constant ruling the decrease condition.

The first strategy was simple back-tracking:
starting with $\hat L^0$, determine $L_k$ to be the largest number of the form $\hat L^0\theta^i$, $i=0,1,\dots$, that satisfies the descent
condition \eqref{eq360}. We will abbreviate this strategy by {\bf BT}.

Second, we used a widening strategy: If $\hat L^0$ satisfies \eqref{eq360}, then determine  $L_k$ to be the
largest number of the form $\hat L^0\theta^{-i}$, $i=0,1,\dots, I_{\max}$, where $I_{\max}$ is a maximal number of widening steps.
If $\hat L^0$ does not satisfy \eqref{eq360}, then compute $L_k$ according to {\bf BT}.
We will denote this strategy by {\bf BT-W}.

Third, if step-size $L=0$ satisfies  \eqref{eq360}, then set $L_k=0$ otherwise determine $L_k$ according to {\bf BT-W}.
We will denote this strategy by {\bf BT-0}.

In all our tests, we chose
\[
 \theta = 0.5, \quad \eta = 10^{-4}, \quad I_{\max}=40.
\]
In addition, \cref{A2} was stopped if
\[
 | f(x_{k+1}) + g(x_{k+1}) - (f(x_k+g(x_k)) | \le 10^{-12}.
\]
The result for computations with different line-search strategies can be found in \cref{table1}.
Here, we denoted by $u^*$ the final iterate of the method. The column 'pde' notes the
number of pde solves during the iteration. Note that the computation of the gradient of the
cost functional requires two pde solves, while one step of the line-search method requires one pde solve.
As can be seen in the table, the standard backtracking method {\bf BT} performs better for smaller initial step-size $\hat L^0$.
The linesearch with widening needs much more pde solves. This is due to the fact that for the first three iterations
$L_{\max}$ steps are done to decrease $L$. Here, the line-search strategy {\bf BT-0} that starts with $L=0$ is clearly better.
Hence, {\bf BT-0} is a good compromise to obtain small values of the objective with a small number of pde solves without tuning the initial step-size.

\begin{table}[htb]
\begin{center}
\begin{tabular}{CCRRc}
\hline
f(u^*)+g(u^*) & \|u^*\|_0 & \text{pde} & \mc{$\hat L^0$} & strategy \\
\hline
5.34002749 & 0.444590 &  31 & 0.000001 & {\bf BT} \\
5.34002749 & 0.444586 &  19 & 0.000010 & {\bf BT} \\
5.34002751 & 0.444308 &  20 & 0.000100 & {\bf BT} \\
5.34003263 & 0.439584 &  11 & 0.001000 & {\bf BT} \\
5.34047914 & 0.337838 &  31 & 0.010000 & {\bf BT} \\
5.35184756 & 0.054546 &  71 & 0.100000 & {\bf BT} \\
5.35905947 & 0.000000 &  42 & 1.000000 & {\bf BT} \\
5.35905947 & 0.000000 &  42 & 10.000000 & {\bf BT} \\
\hline
5.34002750 & 0.444602 & 154 & 0.010000 & {\bf BT-W} \\
5.34002750 & 0.444602 &  40 & 0.010000 & {\bf BT-0} \\
\hline
\end{tabular}
\caption{Comparison of line-search strategies}\label{table1}
\end{center}
\end{table}

\subsection{Comparison to \cite{ItoKunisch2014}}

Let us compare our results to computations of \cite[Example 2.14]{ItoKunisch2014},
where the influence of variations of $\beta$ were studied.
There, no control constraints are present, i.e., $b=+\infty$.
The computations were done with strategy {\bf BT-0} and $\hat L^0=0.01$.
The results can be found in \cref{table2}.
There,  $\|u^*\|_0$ denotes the result of our computations.
The column $N_0$ is taken from \cite[Example 2.14]{ItoKunisch2014}, it denotes
the number of non-zero coefficients of the control, computed with an active set-strategy for finite-difference scheme with $129\times 129$ node.
The column $N_0/129^2$ thus serves as an approximation of the $L^0$-norm of the controls computed in \cite{ItoKunisch2014}.
As can be seen in \cref{table2}, the results are in good agreement.

\begin{table}[htb]
\begin{center}
\begin{tabular}{CCRL}
\hline
\beta & \|u^*\|_0 & N_0  & \mc{$N_0/129^2$}\\
\hline
0.5000 & 0.000000 & 0 & 0.00000000\\
0.1000 & 0.068926 & 1135 & 0.06820504\\
0.0500 & 0.173892 & 2852 & 0.17138393\\
0.0100 & 0.444780 & 7296 & 0.43843519\\
0.0050 & 0.540102& 8853 & 0.53199928\\
0.0010 & 0.736796 & 12090 & 0.72651884\\
\hline
\end{tabular}
\caption{Comparison  \cite[Example 2.14]{ItoKunisch2014}}\label{table2}
\end{center}
\end{table}

\subsection{Discretization}

Next, we report on the influence of discretization on the algorithm.
We use the problem data as in \cref{sec41}.
Again, the computations were done with strategy {\bf BT-0} and $\hat L^0=0.01$.
As can be seen from \cref{table3}, the values of the objective as well as of $\|\cdot\|_0$ are converging for decreasing mesh-size $h$.
In addition, the number of pde solves until the termination criterion is reached is stable across different discretization levels.

\begin{table}
\begin{center}
\begin{tabular}{CCRRc}
\hline
h & f(u_h^*)+g(u_h^*) & \|u_h^*\|_0 & \text{pde} \\
\hline
0.1414 & 3.145593 & 0.365000 &  42  \\
0.0707 & 4.286681 & 0.428750 &  39  \\
0.0354 & 4.850340 & 0.437812 &  54  \\
0.0177 & 5.121182 & 0.443828 &  51  \\
0.0088 & 5.252535 & 0.444355 &  49  \\
0.00442 & 5.317021 & 0.444561 &  34  \\
0.00221 & 5.348944 & 0.444645 &  40  \\
0.00110 & 5.364823 & 0.444651 &  40  \\
\hline
\end{tabular}
\caption{Influence of discretization}\label{table3}
\end{center}
\end{table}

\subsection{Comparison to $L^1$-optimization problems}
\label{sec44}

In the literature, problems involving $L^1$-norms are solved to approximate $L^0$-optimization problems.
This goes back to the pioneering work \cite{Donoho2006}, where it is shown that under some condition, solutions to $L^1$-problems
solve also the $L^0$-problem.
In \cref{sec23}, we showed that both types of problems are equivalent in the case $\alpha=0$.
Here, we will compare the outcome of $L^0$-minimization with $L^1$-minimization for positive $\alpha$. That is,
we compare solutions of \eqref{eq001} to the solutions of
\be\label{eq410}
\min_{u\in U_{ad}} f(u) + \frac \alpha 2 \|u\|_{L^2(\Omega)}^2 + \beta \|u\|_{L^1(\Omega)}.
\ee
We computed solutions to the $L^1$-problem \eqref{eq410} and to the original $L^0$-problem \eqref{eq001}.
Here, we computed solutions of both problems for different values of $\beta$, i.e.,
\[
 \beta \in \{ 0.5 \cdot 0.7^l, \ l=0\dots 15\},
\]
as solutions to both problems \eqref{eq001} and \eqref{eq410} for the same value of $\beta$ are not directly comparable.
In \cref{fig4}, we plotted the pairs $( f(u), \ \|u\|_0)$ for the solutions $u$ of these problems for different
values of $\beta$.
As can be seen, the solutions of the $L^0$-problems clearly dominate those arising from the $L^1$-problems,
in the sense that for each solution $u_1$ of \eqref{eq410} to some $\beta_1$ there is a solution $u_0$ of \eqref{eq001}
to some $\beta_0$ such that
$f(u_0)\le f(u_1)$ and $\|u_0\|_0\le \|u_1\|_0$. In addition, with the exception of $u_1=0$ both inequalities are strict.

\begin{figure}[htb]
\begin{center}
 \includegraphics[width=0.6\textwidth]{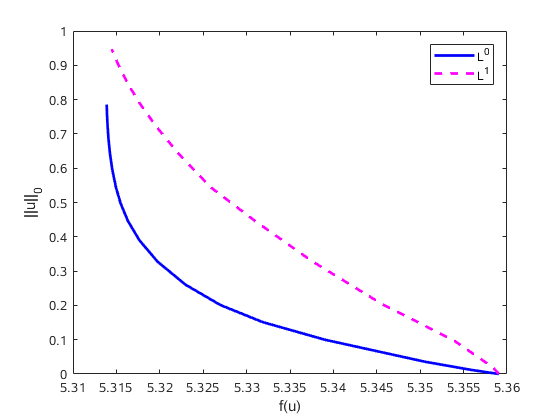}
\end{center}
\caption{Comparison of $L^0$ and $L^1$ optimization}
\label{fig4}
\end{figure}

\subsection{An unsolvable problem}

In this section, we discuss an unsolvable problem. Consider the
following optimal control problem:
Minimize the functional
\[
  \frac12\|y_u-y_d\|_{L^2(\Omega)}^2 + \frac\alpha2 \|u\|_{L^2(\Omega)}^2 + \beta\|u\|_0,
\]
where $y_u$ denotes the weak solution of the elliptic partial differential equation
with Neumann boundary conditions
\[
 -\Delta y + y = u \quad \text{ in } \Omega, \quad \frac{\partial y}{\partial n}=0 \quad \text{ on } \partial \Omega.
\]
The convexified problem as considered in \cref{sec34} is uniquely solvable, as the cost functional
$f(u) + g^{**}(u)$ is strictly convex due to the presence of the term $\|y_u-y_d\|_{L^2(\Omega)}^2$ and the injectivity of $u\mapsto y_u$.
Given $\alpha>0$, $\beta>0$, and
\[
 y_d(x) = -\sqrt{\frac \beta\alpha} - \sqrt{2 \alpha\beta},
\]
it is easy to see that the constant functions
\[
 \bar u(x) = \sqrt{\frac \beta\alpha}, \quad \bar y:=y_{\bar u}= \sqrt{\frac \beta\alpha}
\]
solve the convexified problem. The gradient $\nabla f(\bar u)$ is given by
$\nabla f(\bar u) = \bar p = - \sqrt{2 \alpha\beta}$, where the adjoint state $\bar p$ is a weak solution of
\[
 -\Delta p + p = \bar y-y_d \quad \text{ in } \Omega, \quad \frac{\partial p}{\partial n}=0 \quad \text{ on } \partial \Omega.
\]
In the light of the discussion in \cref{sec34}, the control $\bar u$ does not satisfy the maximum principle \eqref{eqpmp}.
Since $\bar u$ is the unique solution of the convexified problem it follows that the original problem is unsolvable.

We applied our \cref{A2} to this problem with $\Omega=(0,1)^2$. As predicted by \cref{thm325}, the control $\bar u$ is not
a fixed point. It turns out that for this particular example the iterates converge to the global minimizer of
\[
  \frac12\|y_u-y_d\|_{L^2(\Omega)}^2 + \frac\alpha2 \|u\|_{L^2(\Omega)}^2,
 \]
which is given by the control $\tilde u(x) =\frac1{\alpha+1}y_d(x)$.

\subsection{Application to a switching control problem}

Let us consider the following switching control problem. It was considered in \cite[Section 6]{ClasonItoKunisch2016}.
Let $\Omega=(0,1)$, $\Omega_1 = (0,1)\times(0,\frac14)$, and $\Omega_2 = (0,1)\times(\frac34,1)\times(0,1)$ be given.
The state equation is defined by
\[
 -\Delta y = \chi_{\Omega_1}(x_1,x_2) u_1(x_1) + \chi_{\Omega_2}(x_1,x_2) u_2(x_1) \quad \text{ on } \Omega,
\]
with homogeneous Dirichlet boundary conditions.
Here, two controls $u_1$ and $u_2$ are present. At each point $x_1 \in (0,1)=:I$ at most one of these two controls should be non-zero,
that is $u_1\cdot u_2=0$ should be achieved.
Following \cite{ClasonItoKunisch2016}, this switching constraint is penalized using the $L^0$-norm. The resulting optimal control problem
reads: Minimize
\[
 F(u_1,u_2):= \frac12\|y-y_d\|_{L^2(\Omega)}^2 + \frac\alpha2 \left( \|u_1\|_{L^2(I)}^2 + \|u_2\|_{L^2(I)}^2\right) + \beta \|u_1u_2\|_0,
\]
where $\|u_1u_2\|_0 = \int_I |u_1u_2|_0\dx_1$.
We will apply the proximal gradient algorithm to this problem. Here, the prox-map can be calculated pointwise again.
The scalar version of this prox map can be calculated by solving the optimization problem
\[
 \min_{u\in \R^2} g^T u + \frac L2 \|u-u_k\|_2^2 + \frac\alpha2 \|u\|_2^2 + \beta|u_1u_2|_0,
\]
which can be carried out by elementary calculations.
We used this prox-map as substitute in \cref{A2}. The step-size parameter was selected using the descent condition \eqref{eq360}.
We applied the same discretization as in the previous example. The controls $u_1$ and $u_2$ are discretized by piecewise constant
functions on a subdivision of $I$ induced by the triangulation of $\Omega$.
We took the following data \cite[Section 6]{ClasonItoKunisch2016}
\[
 y_d(x_1,x_2) = x_1 \sin(2 \pi x_1)\sin(2 \pi x_2), \quad \alpha = 10^{-5}.
\]
Using the proximal gradient algorithm with step-size strategy {\bf BT-0}, we computed solutions for different values of
$\beta$. As can be seen from \cref{fig6} and \cref{table4}, for $\beta\ge 0.1$ we got $u_1u_2=0$.

\begin{figure}[htb]
\includegraphics[width=0.32\textwidth]{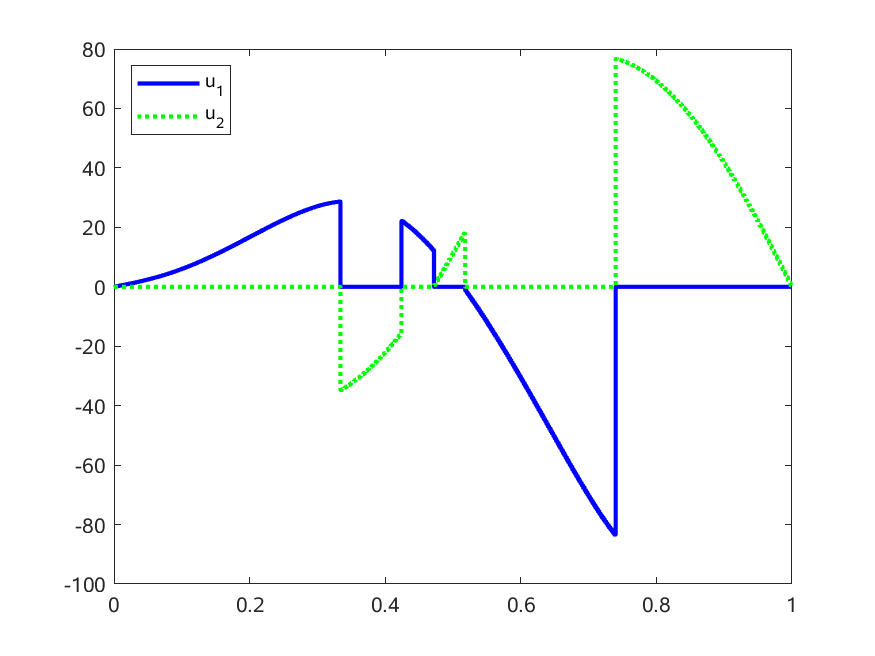}
\includegraphics[width=0.32\textwidth]{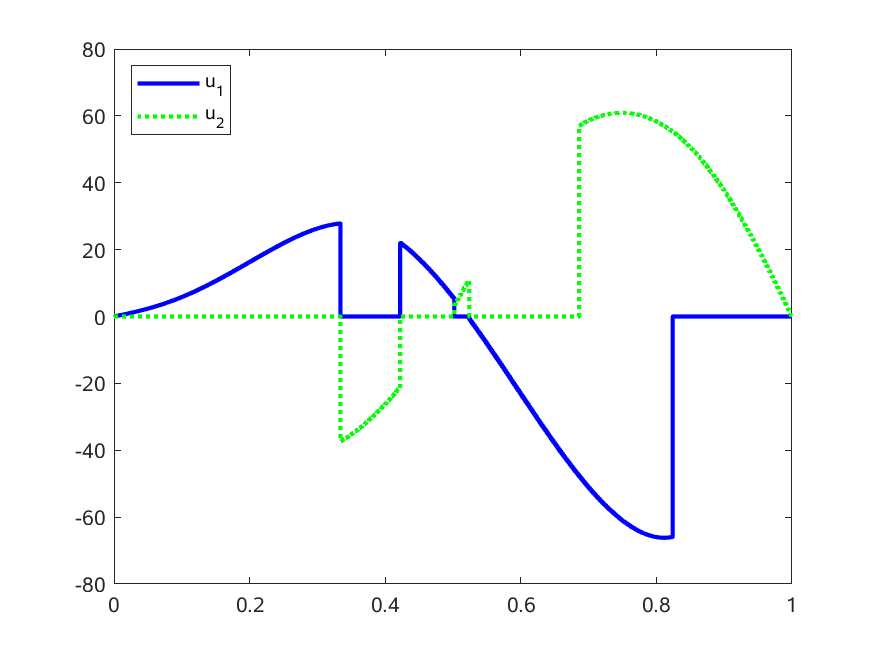}
\includegraphics[width=0.32\textwidth]{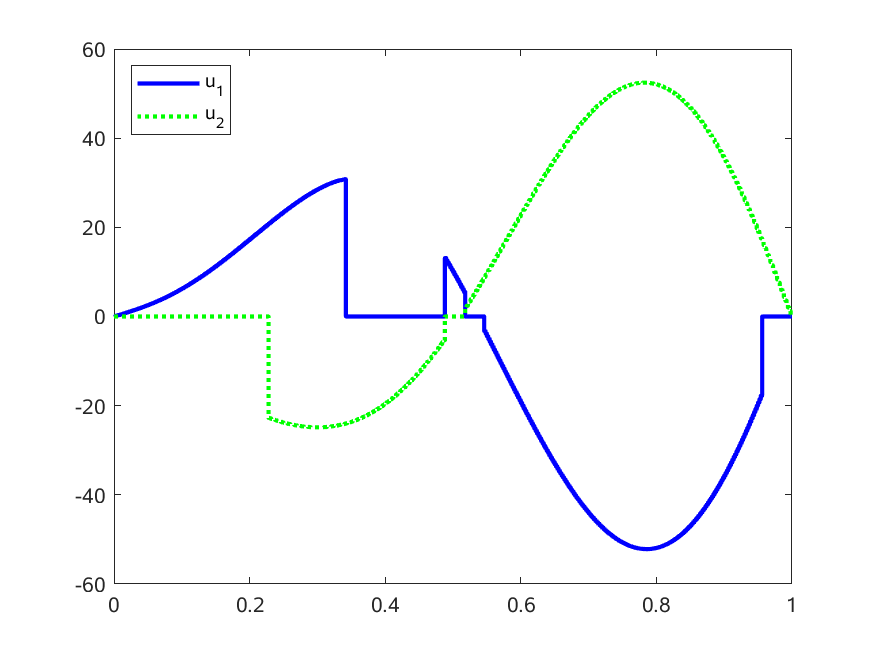}
\caption{Switching control problem: $u_1$ and $u_2$ (dotted lines)  for $\beta=10^{-1},10^{-2},10^{-3}$ (from left to right)}
\label{fig6}
\end{figure}

\begin{table}
\begin{center}
\begin{tabular}{CCC}
\hline
\beta &  F(u_1,u_2) & \|u_1u_2\|_0 \\
\hline
0.1000 & 0.024680& 0.0000 \\
0.0100 & 0.022362& 0.1380 \\
0.0010 & 0.018842& 0.5240 \\
\hline
\end{tabular}
\caption{Switching control problem: dependency on $\beta$}
\label{table4}
\end{center}
\end{table}

Unfortunately, we were not able to prove a result analogous to \cref{thm313}, as a substitute of \cref{lem27}.
Hence, the analysis of the convergence of the proximal gradient method applied to this switching control problem is subject to future research.

\bibliography{hard}
\bibliographystyle{plain_abbrv}

\end{document}